\newcommand{\RR}{{\mathbb R}}
\newcommand{\N}{\mathbb{N}}
\newcommand{\bx}{x}
\newcommand{\by}{y}
\newcommand{\K}{\mathbf{K}}
\newcommand{\Y}{\mathbf{Y}}
\newcommand{\bs}{\mathbf{S}}
\newcommand{\bX}{x}
\newcommand{\bY}{y}
\newcommand{\ud}{\mathrm{d}}
\newcommand{\td}[1]{\tilde{#1}}
\newcommand{\dsos}{\mbox{\upshape\tiny dsos}}
\newcommand{\sdsos}{\mbox{\upshape\tiny sdsos}}
\newcommand{\psdp}{\mbox{\upshape\tiny primal}}
\newcommand{\dsdp}{\mbox{\upshape\tiny dual}}
\newcommand{\re}{\mathbb{R}}
\newcommand{\ckd}[1]{\mathcal{P}^{#1}(\mathbf{Y})}
\newcommand{\dist}{\text{\rm dist}}
\def\af{\alpha}
\newcommand{\mH}{\mathscr{H}}
\newcommand{\mL}{\mathscr{L}}
\newcommand{\wt}[1]{\widetilde{#1}}
\newcommand{\vol}{\text{\upshape vol}}
\newcommand{\bm}{\mathbf{M}}
\newtheorem{theorem}{Theorem}[section]
\newtheorem{corollary}{Corollary}[section]
\newtheorem{lemma}{Lemma}[section]
\newtheorem{example}{Example}[section]
\newtheorem{definition}{Definition}[section]
\newtheorem{remark}{Remark}[section]
\newtheorem{proposition}{Proposition}[section]
\def\EES{{\accent"5E e}\kern-.5em\raise.8ex\hbox{\char'23 }}
\def\ow{o\kern-.42em\raise.82ex\hbox{
   \vrule width .12em height .0ex depth .075ex \kern-0.16em \char'56}\kern-.07em}
\def\OW{o\kern-.460em\raise1.36ex\hbox{
\vrule width .13em height .0ex depth .075ex \kern-0.16em
\char'56}\kern-.07em}
\def\DD{D\kern-.7em\raise0.4ex\hbox{\char '55}\kern.33em}
\title{An SDP method for Fractional Semi-infinite Programming Problems
with SOS-convex polynomials}
\author{Feng Guo$^*$}
\address[Feng Guo]{School of Mathematical Sciences, Dalian  University of Technology, Dalian, 116024, China}
\email{fguo@dlut.edu.cn}
\thanks{$^*$Corresponding Author.}
\author{Meijun Zhang}
\address[Meijun Zhang]{School of Mathematical Sciences, Dalian  University of Technology, Dalian, 116024, China}
\email{mjzhang2021@163.com}
\date{ \today}
\begin{document}

\begin{abstract}
In this paper, we study a class of fractional
semi-infinite polynomial programming problems involving
sos-convex polynomial functions.
For such a problem, by a conic reformulation proposed in our
previous work and the quadratic modules associated with the index set,
a hierarchy of semidefinite programming (SDP) relaxations can be
constructed and convergent upper bounds of the optimum can be
obtained. In this paper, by introducing Lasserre's measure-based
representation of nonnegative polynomials on the index set to the
conic reformulation, we present a new SDP relaxation method for the
considered problem. This method enables us to compute convergent lower
bounds of the optimum and extract approximate minimizers. Moreover,
for a set defined by infinitely many sos-convex polynomial
inequalities,
we obtain a procedure to
construct a convergent sequence of outer approximations which have 
semidefinite representations (SDr). The convergence rate of the lower bounds
and outer SDr approximations are also discussed. 
\end{abstract}
\subjclass[2010]{65K05; 90C22; 90C29; 90C34}
\keywords{fractional optimization;
convex semi-infinite systems; semidefinite programming relaxations;
sum-of-squares convex; polynomial optimization}

\maketitle

\section{Introduction}
The fractional semi-infinite polynomial programming (FSIPP) problem considered in
this paper is in the following form:
\begin{equation}\label{FMP}
	\left\{
	\begin{aligned}
		r^{\star}:=\min_{x\in\RR^m}&\  \frac{f(x)}{g(x)}\\
		\text{s.t.}&\ \varphi_1(x)\le 0,\ldots,\varphi_s(x)\le 0, \\
		&\ p(\bx,\by)\le 0,\ \ \forall \by\in \Y\subset\RR^n,
	\end{aligned}\right. \tag{FSIPP}
\end{equation}
where $f, g$, $\varphi_1,\ldots,\varphi_s\in\RR[\bX]$ and
$p\in\RR[\bX,\bY]$. Here, $\RR[x]$ (resp. $\RR[x,y]$) denotes the ring
of real polynomials in $x=(x_1,\ldots,x_m)$ (resp., $x=(x_1,\ldots,x_m)$
and $y=(y_1,\ldots,y_n)$).
We denote by $\K$ and $\bs$ the feasible set and the set of optimal
solutions of \eqref{FMP}, respectively.
In this paper, we assume that $\bs\neq\emptyset$ and consider the
following assumptions on \eqref{FMP}:
\vskip 3pt
\noindent {\bf A1:}\ (i) $\Y\subseteq[-1, 1]^n$ and is closed; (ii)
		$\varphi_j$, $j=1,\ldots,s,$
		$p(\cdot, \by)$, $y\in \Y$ are all sos-convex;\\
\noindent {\bf A2:}\ (i) $f$, $-g$ are both sos-convex; (ii)
Either $f(x)\ge 0$ and $g(x)>0$ for all $x\in\K$, or $g(x)$
		is affine and $g(x)>0$ for all $x\in\K.$
%

A convex polynomial is called sos-convex if its Hessian matrix can
be written as the product of a polynomial matrix and its transpose
(see Definition \ref{sos-convex}). In particular, separable convex
polynomials and convex quadratic functions are sos-convex.
Therefore, our model \eqref{FMP} under {\bf A1-2}
contains subclasses of {\itshape linear semi-infinite programming} and {\itshape
convex quadratic semi-infinite programming} with polynomial
parametrizations.  Moreover, if $\Y$ is
defined by finitely many polynomial inequalities, then the problem of
minimizing a polynomial $h(y)\in\RR[y]$ over
$\Y$ can be reformulated as an FSIPP problem satisfying {\bf A1-2}.
As is well known, the polynomial optimization problem
is NP-hard even when $n>1$, $h(y)$ is a nonconvex
quadratic polynomial and $\Y$ is a polytope (c.f.
\cite{PardalosVavasis1991}).
Hence, in general the FSIPP problem considered in this paper cannot
be expected to be solved in polynomial time unless P=NP.
Particularly, minimizing a ratio of quadratic functions is of great
importance and some methods can be found in
\cite{SSY1995,WMX2021,YX2020}. However, these methods were given for
dealing with finitely constrained problems, while we aim to solve the
problem \eqref{FMP} with infinitely many constraints.

Over the last several decades, due to a great number of applications
in many fields, semi-infinite programming (SIP) has attracted a great deal
of interest and been very active research areas
\cite{Goberna2017,Goberna2018,Hettich1993,Still2007}.
Numerically, SIP problems can be solved by different approaches
including, for instance, discretization methods,
local reduction methods, exchange methods,
simplex-like methods etc; see \cite{Goberna2017,Hettich1993,Still2007}
and the references therein for details.
If the functions involved in SIP are polynomials, the 
representations of nonnegative polynomials over semi-algebraic sets
from real algebraic geometry allow us to derive semidefinite
programming (SDP) \cite{handbookSDP} relaxations for such problems
\cite{GS2020,Lasserre2011,SIPPSDP,XSQ}. 

In our previous work
\cite{GuoJiaoFSIPP}, instead of sos-convexity, we deal with the
FSIPP problems under convexity assumption. In \cite{GuoJiaoFSIPP}, we
first reformulate the FSIPP problem to a conic optimization problem. This
conic reformulation, together with inner approximations with
sums-of-square structures of the cone of nonnagative polynomials on
$\Y$ (e.g. the quadratic modules \cite{Putinar1993} associated with
$\Y$), enables us to derive a hierarchy of SDP relaxations of
\eqref{FMP}.  Applying such appoach to \eqref{FMP} under 
sos-convexity assumption, we can obtain convergent {\itshape upper}
bounds of $r^{\star}$. In this paper, we follow the methodology in
\cite{GuoJiaoFSIPP} and present a new SDP method for \eqref{FMP} under {\bf
A1-2}. Instead of the quadratic modules associated with $\Y$, we
introduce Lasserre's measure-based representation of nonnegative polynomials
on $\Y$ (c.f. \cite{LasserreOuter}) to the conic reformulation of
\eqref{FMP}. With the new SDP method, we can compute convergent
{\itshape lower} bounds of $r^{\star}$ and extract approximate
minimizers of \eqref{FMP} in the case when $\Y$ is a simple set, like 
a box, a ball, a sphere, or a polytope. 

We say that a convex set $C$ in $\RR^m$ has a {\itshape semidefinite
representation} (SDr) if there exist some integers
$l, k$ and real $k\times k$ symmetric matrices $\{A_i\}_{i=0}^m$ and
$\{B_j\}_{j=1}^l$ such that
\begin{equation}\label{eq::sdr}
	C=\left\{x\in\RR^m\ \Big|\ \exists w\in\RR^l,\ \text{s.t.}\ A_0+\sum_{i=1}^m
	A_ix_i+\sum_{j=1}^l B_jw_j\succeq 0\right\}.
\end{equation}
Semidefinite representations of convex
sets can help us to build SDP relaxations of many computationally intractable
optimization problems. Arising from it, 
one of the basic issues in convex algebraic geometry is to
characterize convex sets in $\RR^m$ which are SDr sets and give
systematic procedures to obtain their semidefinite representations
(or arbitrarily close SDr approximations)
\cite{LecturesConOpt,thetabody,HeltonNie,SRCSHN,convexsetLasserre,Lasserre15,MHL15}.
Observe that the feasible set of \eqref{FMP} is a subset of $\RR^m$
defined by infinitely many sos-convex polynomial inequalities. For a
set of this form, applying the approach in our previous work
\cite{GS2020}, a convergent sequence of {\itshape inner} SDr
approximations can be constructed. In this paper, from the new SDP
relaxations of \eqref{FMP}, we obtain a procedure to construct a
convergent sequence of {\itshape outer} SDr approximations of such a
set. 

Remark that the main ingredient in our method to obtain the lower
bounds of $r^{\star}$ and the outer SDr approximations of $\K$ is
Lasserre's measure-based representation of nonnegative polynomials on
the index set \cite{LasserreOuter}. For polynomial minimization
problems which can be regarded as a special case of \eqref{FMP}, the
convergence rate of Lasserre's measure-based upper
bounds is well studied in \cite{SL2020} in difference situations. 
By combining the results in \cite{SL2020} and the metric regularity of
semi-infinite convex inequality system (c.f. \cite{CKLP2007}), 
we derive some convergence analysis of the lower bounds of $r^{\star}$
and the outer SDr approximations of $\K$ obtained in this paper.

\vskip 5pt
This paper is organized as follows. In Section \ref{sec::pre}, some
notation and preliminaries are given. In Section \ref{sec::SDP}, we
present a hierarchy of SDP relaxations for the lower bounds of
$r^{\star}$ and a procedure to construct a convergent sequence of
outer SDr approximations of $\K$. 
The convergence rate of the lower bounds and outer SDr approximations
is discussed in Section \ref{sec::rate}.
Some numerical experiments are given in Section \ref{sec::num}.

\section{Preliminaries}\label{sec::pre}

In this section, we collect some notation and preliminary results
which will be used in this paper.
We denote by $x$ (resp., $y$)
the $m$-tuple (resp., $n$-tuple) of variables $(x_1,\ldots,x_m)$ (resp.,
$(y_1,\ldots,y_n)$).
The symbol $\N$ (resp., $\re$, $\RR_+$) denotes
the set of nonnegative integers (resp., real numbers, nonnegative real
numbers). For any $t\in \re$, $\lceil t\rceil$ denotes the smallest
integer that is not smaller than $t$.
For $u\in \re^m$,
$\Vert u\Vert$ denotes the standard Euclidean norm of $u$.
For $\alpha=(\alpha_1,\ldots,\alpha_n)\in\N^n$,
$|\alpha|=\alpha_1+\cdots+\alpha_n$.
For $k\in\N$, denote $\N^n_k=\{\alpha\in\N^n\mid |\alpha|\le
k\}$ and $\vert\N^n_k\vert$ its cardinality.
For variables $x \in \re^m$, $y \in \re^n$ and
$\beta\in\N^m, \af \in \N^n$, $x^\beta$, $y^\af$ denote
$x_1^{\beta_1}\cdots x_m^{\beta_m}$,
$y_1^{\af_1}\cdots y_n^{\af_n}$, respectively.
$\RR[x]$ (resp., $\mathbb{R}[y]$) denotes
the ring of polynomials in $x$ (resp., $y$) with real
coefficients.
For $h \in \RR[x]$ (resp. $\in\RR[y]$), we denote by $\deg_x (h)$
(resp. $\deg_y(h)$) its (total) degree.
For $k\in\N$, denote by $\RR[x]_k$ (resp., $\RR[y]_k$) the set of polynomials
in $\RR[x]$ (resp., $\RR[y]$) of degree up to $k$.
For $A=\RR[x],\ \RR[y],\ \RR[x]_k,\ \RR[y]_k$, denote by $A^*$ the dual
space of linear functionals from $A$ to $\RR$.
Denote by $\mathbf{B}^m$ the unit ball in $\RR^m$ and
$\mathbf{B}^m_r(u)$ (resp., $\mathbf{B}^m_r$) the
ball centered at $u$ (resp., the origin) in $\RR^m$ with the radius $r$. 

\vskip 10pt 

One of the difficulties in solving \eqref{FMP} is the
feasibility test of a point $u\in\RR^m$, which is caused by the
infinitely many constraints $p(u,y)\le 0$ for all $y\in\Y$. 
Thus, it is reasonable to
study the representations of nonnegative polynomials on $\Y$.
Denote
\[
	d_y:=\deg_y
	p(x,y)\quad\text{and}\quad\mathcal{P}_{d_y}(\Y):=\{h(y)\in\RR[y]_{d_y}\mid 
	h(y)\ge 0, \ \forall y\in\Y\}. 
\]
Now we recall the measure-based outer approximations of
$\mathcal{P}_{d_y}(\Y)$ proposed by Lasserre \cite{LasserreOuter}.


A polynomial $h\in\RR[y]$ is said to be a
sum-of-squares (sos) of polynomials if it can be written as $h=\sum_{i=1}^l
h_i^2$ for some $h_1,\ldots,h_l \in\RR[y]$.
The symbols $\Sigma^2[x]$ and $\Sigma^2[y]$ denote the sets of polynomials that are
sum-of-squares of polynomials in $\RR[x]$ and $\RR[y]$, respectively. For each $k\in\N$,
denote $\Sigma^2_k[x]:=\Sigma^2[x]\cap\RR[x]_{2k}$ and
$\Sigma^2_k[y]=\Sigma^2[y]\cap\RR[y]_{2k}$, respectively.
Note that for a given $h\in\RR[y]$, checking if 
$h\in\Sigma^2_k[y]$ is an SDP feasibility problem.
In fact, denote by $\mathbf{v}_k$ the column vector containing all
monomials in $\RR[y]$ of degree at most $k$.  Then, $h\in\Sigma^2_k[y]$ if and only if there
exists a positive semidefinite matrix
$H\in\RR^{|\mathbf{v}_k|\times|\mathbf{v}_k|}$ such that
$h=\langle H, \mathbf{v}_k{\mathbf{v}_k}^T\rangle$ (c.f. \cite{convexAGch6}). 

In the rest of this paper, 
\begin{center}
	let $\nu$ be a fixed and finite Borel measure with support exactly $\Y$.
\end{center}
For each $k\in\N$, define
\begin{equation}\label{eq::outerapp}
	\mathcal{P}^{k}_{d_y}(\Y):=\left\{\psi(y)\in\RR[y]_{d_y} \mid
	\int_{\Y} \psi(y)\sigma(y) \ud\nu(y)\ge0, \ \forall
	\sigma\in\Sigma^2_k[y]\right\}.
\end{equation}
Then for each $k\in\N$, it is clear that $\mathcal{P}^{k}_{d_y}(\Y)$ is a
closed subset of $\RR[y]_{d_y}$ and
$\mathcal{P}^{k}_{d_y}(\Y)\supset\mathcal{P}_{d_y}(\Y)$. 

To lighten the notation, throughout
the rest of the paper, we abbreviate the notation
$\mathcal{P}_{d_y}(\Y)$ and $\mathcal{P}^{k}_{d_y}(\Y)$ to
$\mathcal{P}(\Y)$ and $\mathcal{P}^{k}(\Y)$, respectively. 
\begin{theorem}\cite[Theorem 3.2]{LasserreOuter}\label{th::outer}
	Suppose that $\Y\subseteq[-1, 1]^n$, then 
	we have
	$\ckd{k_1}\supset\ckd{k_2}\supset\mathcal{P}(\Y)$ for
	$k_1<k_2$ and $\mathcal{P}(\Y)=\cap_{k=1}^\infty\ckd{k}$. 
\end{theorem}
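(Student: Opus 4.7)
The plan is to dispose of the nested inclusions by directly unwinding the definition and to concentrate the work on the intersection identity.

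For $\ckd{k_1} \supset \ckd{k_2}$ with $k_1 < k_2$ and for $\ckd{k} \supset \mathcal{P}(\Y)$, both follow immediately from \eqref{eq::outerapp}: the first because $\Sigma^2_{k_1}[y] \subseteq \Sigma^2_{k_2}[y]$, so a $\psi$ that tests nonnegatively against the larger family of weights tests nonnegatively against the smaller one as well; the second because $\psi \geq 0$ on $\Y$ together with $\sigma \geq 0$ on $\RR^n$ gives $\psi\sigma \geq 0$ on $\Y$, so $\int_{\Y} \psi \sigma \ud y \geq 0$. This also delivers $\mathcal{P}(\Y) \subseteq \cap_{k \geq 1} \ckd{k}$.

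The substantive step is the reverse inclusion $\cap_{k \geq 1} \ckd{k} \subseteq \mathcal{P}(\Y)$, which I would prove by contrapositive. Given $\psi \in \RR[y]_{d_y}$ with $\psi(y_0) < 0$ at some $y_0 \in \Y$, continuity furnishes $\delta, \epsilon > 0$ with $\psi \leq -\epsilon$ on $\mathbf{B}^n_\delta(y_0) \cap \Y$. It suffices to exhibit one $\sigma \in \Sigma^2[y]$ with $\int_{\Y} \psi \sigma \ud y < 0$, since such a $\sigma$ lies in $\Sigma^2_k[y]$ for every $k \geq \tfrac{1}{2}\deg\sigma$ and thus certifies $\psi \notin \ckd{k}$. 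The natural candidate is a concentrating kernel: take $g(y) := 1 - c\|y - y_0\|^2$ with $c > 0$ chosen small enough (feasible since $\Y \subseteq [-1,1]^n$ is bounded) that $g > 0$ on $\Y$ with $g(y_0) = 1$, and set $\sigma_k := g^{2k} = (g^k)^2 \in \Sigma^2_{2k}[y]$. Then $\sigma_k$ decays uniformly at the geometric rate $\rho^{2k}$ on $\Y \setminus \mathbf{B}^n_\delta(y_0)$, with $\rho := 1 - c\delta^2 < 1$, while remaining at least $(\rho')^{2k}$ with $\rho' := 1 - c\delta^2/4 > \rho$ on the smaller ball $\mathbf{B}^n_{\delta/2}(y_0)$. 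Splitting the integral at $\mathbf{B}^n_\delta(y_0)$ yields the bound
\[
    \int_{\Y} \psi \sigma_k \ud y \;\leq\; -\epsilon (\rho')^{2k} \vol\!\left(\mathbf{B}^n_{\delta/2}(y_0) \cap \Y\right) + \|\psi\|_\infty \rho^{2k} \vol(\Y),
\]
and since $\rho'/\rho > 1$, the negative term dominates for $k$ sufficiently large.

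The principal obstacle is the non-degeneracy estimate $\vol(\mathbf{B}^n_{\delta/2}(y_0) \cap \Y) > 0$, which is needed to prevent the dominating term from collapsing to zero. This amounts to $y_0$ being a point of positive density for the reference measure on $\Y$, and is built into the standing setup of \cite{LasserreOuter} (the measure has full support on $\Y$). With this in hand, combining the two inclusions delivers $\mathcal{P}(\Y) = \cap_{k=1}^\infty \ckd{k}$.
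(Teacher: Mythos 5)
Your proof is correct, but note that the paper offers no proof of its own here: Theorem \ref{th::outer} is quoted verbatim from \cite[Corollary 3.6]{LasserreOuter}, so the only comparison available is with Lasserre's original argument. There, the reverse inclusion is obtained by a softer functional-analytic route: from $\int \psi\,h^2\,\ud\mu\ge 0$ for all polynomials $h$ one passes, by density of polynomials in $L^2(\mu)$ for a compactly supported reference measure $\mu$, to the statement that $\psi\,\ud\mu$ is a nonnegative measure, whence $\psi\ge 0$ on $\supp{\mu}=\Y$. Your concentration-kernel argument with $\sigma_k=(1-c\Vert y-y_0\Vert^2)^{2k}$ is a self-contained, quantitative substitute for that density step, and the geometric-rate bookkeeping ($\rho'>\rho$, so the mass near $y_0$ eventually dominates) is sound; you also correctly insist that $g>0$ on all of $\Y$, without which $g^{2k}$ could blow up off the ball and ruin the upper estimate. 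The one genuine subtlety is exactly the one you flag: with $\int_\Y\cdot\;\ud y$ read as Lebesgue measure restricted to $\Y$, the non-degeneracy $\vol(\mathbf{B}^n_{\delta/2}(y_0)\cap\Y)>0$ can fail for thin sets --- e.g.\ $\Y=\mathbb{S}_1$, which the paper itself uses in Section \ref{sec::num} --- so the statement must be read, as in \cite{LasserreOuter}, with respect to a finite Borel measure whose support equals $\Y$ (surface measure on the sphere, etc.); under that reading every relatively open neighborhood of a point of $\Y$ automatically has positive measure and your estimate closes. So the argument is complete modulo making that reference measure explicit.
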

\begin{remark}{\rm
		As proved in \cite[Theorem 3.2]{LasserreOuter}, it is
		required that $\Y\subseteq[-1, 1]^n$ for the
		convergence result in Theorem \ref{th::outer}. That is why it
		is assumed in {\bf A1} which, however, can be fulfilled after
		a possible rescaling if $\Y$ is compact.
}
\end{remark}

For a given $\psi\in\RR[y]_{d_y}$, it is not hard to see that
$\psi\in\ckd{k}$ if and only if the matrix $\int_{\Y}\psi
\mathbf{v}_k{\mathbf{v}_k}^T\ud\nu(y)$ is positive semidefinite. Observe that
each entry in the matrix $\int_{\Y}\psi
\mathbf{v}_k\mathbf{v}_k^T\ud\nu(y)$
is a linear combination of the coefficients of $\psi$. It implies that
each $\ckd{k}$ has an SDr with no lifting ($l=0$ in \eqref{eq::sdr})
and thus checking if $\psi\in\ckd{k}$ is
an SDP feasibility problem. We emphasize that, to get the SDr of
$\ckd{k}$, we need compute effectively
the integrals $\int_{\Y}y^{\beta}\ud\nu(y)$, $\beta\in\N^n$. 
There are several interesting cases of $\Y$ where these integrals can be
obtained either explicitly in closed form or numerically (see
\cite{LasserreOuter} and Section \ref{sec::num}).

\vskip 10pt
Now let us recall some background above sos-convex polynomials in
$\RR[x]$ introduced by Helton and Nie \cite{SRCSHN}.
\begin{definition}{\upshape \cite{SRCSHN}}\label{sos-convex}
	A polynomial $h\in\RR[x]$ is sos-convex if there are an
	integer $r$ and a  matrix polynomial $H\in\RR[x]^{r\times m}$ such
	that the Hessian $\nabla^2 h=H(x)^TH(x)$.
\end{definition}

Clearly, an sos-convex polynomial is convex. However, the
converse is not true. Ahmadi and Parrilo \cite{Ahmadi2012} proved that
the set of convex polynomials and the set of sos-convex polynomials in
$\RR[x]_k$ coincide if and
only if $m=1$ or $k=2$ or $(m,k)=(2,4)$. 
Thus, any convex quadratic function and any convex separable
polynomial is an sos-convex polynomial. 
The significance of sos-convexity is that
it can be checked numerically by solving an SDP problem (see
\cite{SRCSHN}), while checking the convexity of a polynomial is generally
NP-hard (c.f. \cite{Ahmadi2013}).
Interestingly, an extended Jensen's inequality holds for sos-convex
polynomials.
\begin{proposition}{\upshape\cite[Theorem 2.6]{LasserreConvexity2009}}\label{prop::Jensen}
	Let $h\in\RR[x]_{2d}$ be sos-convex$,$ and let
	$\mL\in(\RR[x]_{2d})^*$ satisfy $\mL(1)=1$ and $\mL(\sigma)\ge 0$ for
	every $\sigma\in\Sigma^2_d[x]$. Then$,$ $$\mL(h(x))\ge
	h(\mL(x_1),\ldots,\mL(x_m)).$$
\end{proposition}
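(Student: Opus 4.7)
The plan is to reduce the assertion to the following sum-of-squares tangent-plane inequality for s.o.s-convex polynomials: for every $u\in\RR^m$,
$$h(x)-h(u)-\nabla h(u)^T(x-u)\in\Sigma^2_d[x].$$
Once this is established, I would apply $\mL$ at the distinguished point $u:=(\mL(x_1),\ldots,\mL(x_m))$; the affine part drops out because $\mL(1)=1$ and $\mL(x_i)=u_i$, while the s.o.s.\ remainder is mapped to a nonnegative number, which is exactly the asserted inequality.

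To prove the tangent-plane inequality, I would start from Taylor's formula with integral remainder,
$$h(x)-h(u)-\nabla h(u)^T(x-u)=\int_0^1(1-t)\,(x-u)^T\nabla^2 h(u+t(x-u))(x-u)\,\ud t,$$
and invoke the s.o.s-convex factorization $\nabla^2 h(x)=H(x)^TH(x)$ from Definition \ref{sos-convex} to rewrite the integrand as $(1-t)\,\Vert H(u+t(x-u))(x-u)\Vert^2$. The crucial observation is that, since $H$ has polynomial entries of degree at most $d-1$, the vector $H(u+t(x-u))(x-u)$ can be written as $V(t)\,\mathbf{w}(x)$, where $\mathbf{w}(x)$ is the column vector of monomials in $x$ of degree at most $d$ and $V(t)$ is a matrix whose entries are polynomials in $t$ only. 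Consequently,
$$h(x)-h(u)-\nabla h(u)^T(x-u)=\mathbf{w}(x)^T M\,\mathbf{w}(x),\qquad M:=\int_0^1(1-t)\,V(t)^TV(t)\,\ud t,$$
and $M\succeq 0$ because its integrand is a positive semidefinite matrix for every $t\in[0,1]$. A Cholesky factorization of $M$ then exhibits the left-hand side as a sum of squares of polynomials of degree at most $d$, placing it in $\Sigma^2_d[x]$.

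With the tangent-plane inequality in hand, the proposition is a one-line computation. Setting $u:=(\mL(x_1),\ldots,\mL(x_m))$ and $\sigma(x):=h(x)-h(u)-\nabla h(u)^T(x-u)\in\Sigma^2_d[x]$, and applying the linear functional $\mL$ together with $\mL(1)=1$, $\mL(x_i)=u_i$ and $\mL(\sigma)\ge 0$ gives $\mL(h(x))=h(u)+\mL(\sigma)\ge h(u)=h(\mL(x_1),\ldots,\mL(x_m))$.

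The main obstacle is the tangent-plane s.o.s.\ inequality itself. Taylor's formula is routine, but one must reorganize the integrand so that, after integrating out $t$, the result is a single Gram-matrix quadratic form $\mathbf{w}(x)^T M\,\mathbf{w}(x)$ against a \emph{fixed} monomial basis in $x$. This reorganization is only possible because s.o.s-convexity supplies a \emph{polynomial} (rather than merely pointwise) factorization of the Hessian; without this, the integrand would only be a sum of squares for each $t$, and term-by-term integration over $t$ need not preserve the s.o.s.\ property.
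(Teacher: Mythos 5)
Your proof is correct. The paper offers no argument of its own for this proposition---it simply cites \cite[Theorem 2.6]{LasserreConvex}---and your derivation (Taylor's integral remainder, the polynomial factorization $\nabla^2 h=H^TH$ collapsing the integral into a single fixed Gram matrix $M\succeq 0$ so that the tangent-plane remainder lies in $\Sigma^2_d[x]$, then evaluation of $\mL$ at $u=(\mL(x_1),\ldots,\mL(x_m))$) is precisely the standard argument from that reference, so it matches the intended proof.
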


The following result plays a significant role
in this paper.
\begin{lemma}{\upshape\cite[Lemma 8]{SRCSHN}}\label{lem::sosconvex}
	Let $h\in\RR[x]$ be sos-convex. If $h(u)=0$ and $\nabla h(u)=0$
	for some $u\in\RR^m,$ then $h$ is an sos polynonmial.
\end{lemma}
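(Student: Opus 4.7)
The plan is to combine a second-order Taylor expansion around the critical point $u$ with the s.o.s\ factorization of the Hessian, and then recognize the resulting integral as a sum of polynomial squares. Since $h$ is s.o.s-convex, write $\nabla^2 h(x)=H(x)^T H(x)$ for some polynomial matrix $H(x)\in\RR[x]^{r\times m}$. For fixed $x$, set $\phi(t):=h(u+t(x-u))$; the hypotheses $h(u)=0$ and $\nabla h(u)=0$ give $\phi(0)=0$ and $\phi'(0)=0$, so Taylor's theorem with integral remainder yields
$$h(x)\;=\;\phi(1)\;=\;\int_0^1 (1-t)\,\phi''(t)\,\ud t\;=\;\int_0^1 (1-t)\,\bigl\|H(u+t(x-u))(x-u)\bigr\|^2\,\ud t.$$

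Next, I would turn this integral of squares in the parameter $t$ into a true SOS in $x$. For each $i=1,\ldots,r$, the $i$-th component $q_i(x,t):=\bigl(H(u+t(x-u))(x-u)\bigr)_i$ is a polynomial in $(x,t)$; expand it as $q_i(x,t)=\sum_{k=0}^{d} a_{i,k}(x)\,t^k$ with $a_{i,k}\in\RR[x]$, and collect coefficients against $t^{k+l}$ after squaring. Setting $\mathbf{a}_i(x):=(a_{i,0}(x),\ldots,a_{i,d}(x))^T$, one obtains
$$h(x)\;=\;\sum_{i=1}^{r}\mathbf{a}_i(x)^T\,C\,\mathbf{a}_i(x),\qquad C_{kl}=\int_0^1 (1-t)\,t^{k+l}\,\ud t=\frac{1}{(k+l+1)(k+l+2)}.$$

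The key step is then to observe that the moment-Hankel matrix $C$ is positive semidefinite: for any $v\in\RR^{d+1}$, $v^T C v=\int_0^1 (1-t)\bigl(\sum_k v_k t^k\bigr)^2\ud t\ge 0$. Factoring $C=L^T L$ turns each quadratic form into $\|L\mathbf{a}_i(x)\|^2$, a sum of squares of polynomials in $\RR[x]$, and summing over $i$ exhibits $h$ itself as an s.o.s\ polynomial.

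The only real obstacle is the passage from an integral of squares (parametrized by $t$) to a finite SOS decomposition in $x$; this is handled by the PSD-ness of $C$, which is immediate from its integral representation. All other steps are routine: s.o.s-convexity gives the Hessian factorization, and the critical-point hypotheses kill the zeroth- and first-order Taylor terms so that only the pure square-type remainder survives.
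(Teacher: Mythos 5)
Your proof is correct and complete. Note that the paper does not prove this lemma itself---it cites it as Lemma 8 of Helton and Nie \cite{SRCSHN}---and your argument (Taylor's theorem with integral remainder at the critical point, the factorization $\nabla^2h=H^TH$, and positive semidefiniteness of the moment matrix $C_{kl}=\int_0^1(1-t)t^{k+l}\,\ud t$ of the measure $(1-t)\,\ud t$ on $[0,1]$) is essentially the proof given in that reference.
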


\section{SDP relaxations of FSIPP}\label{sec::SDP}
In this section, we first recall the conic reformulation of
\eqref{FMP} proposed in our previous work \cite{GuoJiaoFSIPP}. This
conic reformulation, together with inner approximations with sos
structures of $\mathcal{P}(\Y)$ (e.g., the quadratic modules
\cite{Putinar1993} associated with $\Y$), allows us to derive a
hierarchy of SDP relaxations of \eqref{FMP} and obtain convergent
upper bounds of $r^{\star}$. As a complement, we apply in this paper
the outer approximations $\ckd{k}$ of
$\mathcal{P}(\Y)$ to the conic reformulation and get a new SDP
relaxation method of \eqref{FMP} which can give us convergent lower
bounds of $r^{\star}$. Moreover, we gain a convergent sequence of
outer SDr approximations of $\K$. 

\subsection{Conic reformulation}

In this subsection, let us recall the conic reformulation of
\eqref{FMP} proposed in \cite{GuoJiaoFSIPP} which makes it possible to
derive SDP relaxations of \eqref{FMP}.

Consider the problem
\begin{equation}\label{FMP2}
	\min_{x\in\K}\ \  f(x)-r^{\star}g(x).
\end{equation}
Note that, under {\bf A1-2}, \eqref{FMP2} is clearly a  convex
semi-infinite programming problem and its optimal value is $0$.
Denote by $\mathcal{M}(\Y)$ the set of finite nonnegative measures supported
on $\Y$. Then, the Lagrangian dual of \eqref{FMP2} reads
\begin{equation}\label{eq::dcsip}
	\max_{\mu\in\mathcal{M}(\Y),\eta_j\ge 0} \inf_{x\in\RR^m}
	L_{f,g}(x,\mu,\eta),
\end{equation}
where
\begin{equation}\label{eq::lag}
		L_{f,g}(x,\mu,\eta):=f(x)-r^{\star}g(x)+\int_{\Y}
		p(x,y)\ud\mu(y)+\sum_{j=1}^s\eta_j\varphi_j(x).
\end{equation}

Consider the assumption that

\noindent {\bf A3:}\ The Slater condition holds for $\K$, i.e., there exists
$u\in\K$ such that $p(u,y)<0$ for all $y\in\Y$ and $\varphi_j(u)<0$
for all $j=1,\ldots,s$. 
\begin{proposition}{\rm (c.f. \cite{Still2007,ShapiroSIP})}\label{prop::slater}
Under {\bf A1-3}, 
then there exist $\mu^{\star}\in\mathcal{M}(\Y)$ and
$\eta^{\star}\in\RR_+^s$ such that $\inf_{x\in\RR^m}
L_{f,g}(x,\mu^{\star},\eta^{\star})=0$. Moreover, $\mu^{\star}$ can be
chosen as an atomic measure, i.e., $\mu^{\star}=\sum_{i=1}^l
\lambda_i\zeta_{v_i}$ where $l\le n$, each $\lambda_i>0$ and $\zeta_{v_i}$ is
the Dirac measure at $v_i\in\Y$. 
\end{proposition}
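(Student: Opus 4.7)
The plan is to split the proof into two stages: first establish strong duality between \eqref{FMP2} and its Lagrangian dual \eqref{eq::dcsip}, and then reduce the resulting dual optimal measure to an atomic one.

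I would begin by verifying that \eqref{FMP2} is a genuine convex semi-infinite programming problem under {\bf A1--A2}. The constraints $\varphi_j$ and $p(\cdot,y)$ are convex by {\bf A1}(ii). For the objective $f - r^\star g$, under the first case of {\bf A2}(ii) the positivity of $f$ and $g$ on $\K$ forces $r^\star > 0$, and since $-g$ is sos-convex, $-r^\star g$ is a nonnegative scalar multiple of a convex function, hence convex; under the second case $g$ is affine, so $-r^\star g$ is affine. In either case $f - r^\star g$ is a convex polynomial on $\RR^m$. Its optimal value over $\K$ is exactly $0$: for $\bar{x} \in \bs$ one has $f(\bar{x})/g(\bar{x}) = r^\star$ and $g(\bar{x}) > 0$, so $f(\bar{x}) - r^\star g(\bar{x}) = 0$, while $f(x) - r^\star g(x) \ge 0$ on $\K$ by the definition of $r^\star$ combined with $g > 0$. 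Under the Slater condition {\bf A3}, classical strong duality for convex SIP (as in Still 2007 and Shapiro's survey) then yields the existence of $\mu^\star \in \mathcal{M}(\Y)$ and $\eta^\star \in \RR_+^s$ with $\inf_{x \in \RR^m} L_{f,g}(x, \mu^\star, \eta^\star) = 0$.

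For the atomic reduction, I would expand $p(x,y) = \sum_\alpha q_\alpha(y)\, x^\alpha$ with $q_\alpha \in \RR[y]_{d_y}$, and note that $\int_\Y p(x,y)\,\mathrm{d}\mu(y) = \sum_\alpha \bigl(\int_\Y q_\alpha(y)\,\mathrm{d}\mu(y)\bigr) x^\alpha$, so the Lagrangian depends on $\mu$ only through the finitely many moments $\int_\Y q_\alpha(y)\,\mathrm{d}\mu(y)$. A Tchakaloff- or Richter-type representation theorem then provides an atomic measure $\mu' = \sum_{i=1}^l \lambda_i \zeta_{v_i}$ on $\Y$, with $\lambda_i > 0$ and $v_i \in \Y$, reproducing these moments, so that $L_{f,g}(x, \mu', \eta^\star) = L_{f,g}(x, \mu^\star, \eta^\star)$ for all $x$ and the replacement still attains the dual value $0$. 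Complementary slackness at an optimizer $\bar{x} \in \bs$, which forces $\mathrm{supp}(\mu^\star) \subseteq \{y \in \Y : p(\bar{x}, y) = 0\}$, can be used to constrain the support of $\mu'$ to $\Y$.

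The main obstacle is obtaining the sharp bound $l \le n$ rather than a coarser Carath\'{e}odory-type bound scaled by the dimension of the moment space, which is generically much larger than $n$. This refinement requires a dimension-reduction argument exploiting the geometry of $\Y \subseteq [-1,1]^n$ and the structure of the stationarity system at $\bar{x}$, in the spirit of Rogosinski's theorem applied directly in the ambient dimension of $\Y$. I would invoke the precise form of this reduction from the cited convex SIP literature (Still 2007, Shapiro's SIP survey), where such bounds on the support of extremal dual measures are established.
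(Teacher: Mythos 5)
The paper itself gives no proof of this proposition: it is stated with the citation ``(c.f.\ \cite{Still2007,ShapiroSIP})'' and used as a black box, so the comparison is really against the standard convex-SIP duality argument those references contain. Your first stage reproduces that argument correctly: under {\bf A1--A2} problem \eqref{FMP2} is convex with optimal value $0$ (your case analysis for $-r^{\star}g$ is right), $\Y$ is compact, and the Slater condition {\bf A3} yields an optimal pair $(\mu^{\star},\eta^{\star})\in\mathcal{M}(\Y)\times\RR_+^s$ with $\inf_x L_{f,g}(x,\mu^{\star},\eta^{\star})=0$.

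The second stage is where you drift off course. Tchakaloff does produce an atomic measure matching the moments $\int_{\Y}q_\alpha\,\ud\mu^{\star}$, but the number of atoms it yields is governed by $|\N^m_{d_x}|$, and you are right that no amount of geometry of $\Y\subseteq[-1,1]^n$ will push this down to $n$ --- that is simply not where the bound comes from. The standard reduction is carried out in the \emph{decision} space: fix $\bar x\in\bs$; from $L_{f,g}(\bar x,\mu^{\star},\eta^{\star})=0=\inf_x L_{f,g}(x,\mu^{\star},\eta^{\star})$ one gets complementary slackness ($\mu^{\star}$ is supported on $\{y\in\Y: p(\bar x,y)=0\}$ and $\eta^{\star}_j\varphi_j(\bar x)=0$) together with stationarity
\[
0=\nabla f(\bar x)-r^{\star}\nabla g(\bar x)+\int_{\Y}\nabla_x p(\bar x,y)\,\ud\mu^{\star}(y)+\sum_{j=1}^s\eta^{\star}_j\nabla\varphi_j(\bar x).
\]
The vector $\int_{\Y}\nabla_x p(\bar x,y)\,\ud\mu^{\star}(y)$ lies in the convex cone generated by $\{\nabla_x p(\bar x,y): y\ \text{active}\}\subset\RR^m$, so conic Carath\'eodory rewrites it as $\sum_{i=1}^{l}\lambda_i\nabla_x p(\bar x,v_i)$ with $l\le m$, $\lambda_i>0$ and each $v_i$ active; the corresponding atomic measure still makes $L_{f,g}$ vanish and be stationary at $\bar x$, and convexity then gives $\inf_x L_{f,g}(x,\mu',\eta^{\star})=0$. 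Note this produces $l\le m$ (the dimension of $x$), not $l\le n$: the ``$n$'' in the statement is the decision-space dimension in the notation of \cite{Still2007,ShapiroSIP} and should be read as $m$ here; the rest of the paper only ever uses the atomicity of $\mu^{\star}$, never the value of $l$. With that substitution your argument closes, and the Tchakaloff step becomes unnecessary.
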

%
%
%
%

Let $d_x:=\deg_{\bX}(p(\bX,\bY))$ and 
\begin{equation}\label{eq::d}
	\mathbf{d}:=\lceil\max\{\deg(f), \deg(g), \deg(\psi_1), \ldots,
		\deg(\psi_s), \deg_{\bX}(p(\bX,\bY))\}/2\rceil.
\end{equation}

%
%
%
%
For $\mL\in(\RR[x])^*$ (resp., $\mH\in(\RR[y])^*$), denote by
$\mL(p(x,y))$ (resp., $\mH(p(x,y))$) the image of $\mL$ (resp., $\mH$)
on $p(x,y)$ regarded as an element in $\RR[x]$ (resp., $\RR[y]$) with
coefficients in $\RR[y]$ (resp., $\RR[x]$), i.e.,
$\mL(p(x,y))\in\RR[y]$ (resp., $\mH(p(x,y))\in\RR[x]$).

Consider the following conic optimization problem
\begin{equation}\label{eq::primal}
\qquad	\left\{
	\begin{aligned}
		\hat{r}:=\sup_{\rho,\mH,\eta}\
		&\rho&\\
		\text{s.t.}\
		&f(\bX)-\rho
		g(x)+\mH(p(\bX,\by))+\sum_{j=1}^s\eta_j\varphi_j(x)\in\Sigma_{\mathbf{d}}^2[x],
		&\\
		&\rho\in\RR,\ \mH\in(\mathcal{P}(\Y))^*,\ \eta\in\RR_+^s.&
	\end{aligned}\right.
\end{equation}

\begin{proposition}\label{prop::eq}
	Under {\bf A1-3}, we have $\hat{r}=r^{\star}$. 
\end{proposition}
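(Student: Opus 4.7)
The plan is to establish the two inequalities $\hat r\le r^\star$ and $\hat r\ge r^\star$ separately, with the upper bound being a straightforward weak-duality argument and the lower bound being the substantive direction that relies on s.o.s-convexity together with Lemma \ref{lem::sosconvex}.

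For $\hat r\le r^\star$, I would pick any feasible triple $(\rho,\mH,\eta)$ in \eqref{eq::primal} and evaluate the SOS certificate at an optimizer $x^\star\in\bs$. Since the polynomial $f(x)-\rho g(x)+\mH(p(x,y))+\sum_j\eta_j\varphi_j(x)$ is s.o.s, it is nonnegative at $x^\star$. Feasibility of $x^\star$ gives $\varphi_j(x^\star)\le 0$ so $\sum_j\eta_j\varphi_j(x^\star)\le 0$, and gives $-p(x^\star,\cdot)\in\mathcal{P}(\Y)$, so $\mH(-p(x^\star,\cdot))\ge 0$, i.e. $\mH(p(x^\star,\cdot))\le 0$. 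Therefore $f(x^\star)-\rho g(x^\star)\ge 0$, and since $g(x^\star)>0$ by \textbf{A2}-(ii), we conclude $\rho\le f(x^\star)/g(x^\star)=r^\star$.

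For $\hat r\ge r^\star$, I would invoke Proposition \ref{prop::slater} (the Slater condition from \textbf{A3}) to extract an atomic measure $\mu^\star=\sum_{i=1}^l\lambda_i\zeta_{v_i}$ supported on $\Y$ and $\eta^\star\in\RR_+^s$ with $\inf_{x\in\RR^m} L_{f,g}(x,\mu^\star,\eta^\star)=0$. Define $\mH^\star\in(\RR[y])^*$ by $\mH^\star(h):=\int_\Y h\,\ud\mu^\star$; clearly $\mH^\star\in(\mathcal{P}(\Y))^*$. The candidate triple is $(r^\star,\mH^\star,\eta^\star)$. It remains to verify the SOS certificate, i.e. that
\begin{equation*}
\Phi(x):=f(x)-r^\star g(x)+\sum_{i=1}^l\lambda_i p(x,v_i)+\sum_{j=1}^s\eta_j^\star\varphi_j(x)
\end{equation*}
lies in $\Sigma_{\mathbf{d}}^2[x]$. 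The degree bound comes from the definition of $\mathbf{d}$ in \eqref{eq::d}, so the real content is showing $\Phi\in\Sigma^2[x]$.

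The key observation is that $\Phi$ is s.o.s-convex. By \textbf{A1}-(ii), each $\varphi_j$ and each $p(\cdot,v_i)$ is s.o.s-convex; by \textbf{A2}-(i), $f$ is s.o.s-convex and $-g$ is s.o.s-convex. When $f,g>0$ on $\K$ one has $r^\star>0$ so $-r^\star g=r^\star(-g)$ is s.o.s-convex; when $g$ is affine, $\pm g$ are both s.o.s-convex, so again $-r^\star g$ is s.o.s-convex. With all coefficients $\lambda_i,\eta_j^\star\ge 0$, $\Phi$ is a nonnegative combination of s.o.s-convex polynomials and hence s.o.s-convex. Next, evaluating $\Phi$ at any $x^\star\in\bs$ gives $f(x^\star)-r^\star g(x^\star)=0$ together with $\sum_i\lambda_i p(x^\star,v_i)\le 0$ and $\sum_j\eta_j^\star\varphi_j(x^\star)\le 0$, so $\Phi(x^\star)\le 0$; combined with $\inf\Phi=0$ this forces $\Phi(x^\star)=0$. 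Thus $x^\star$ is a minimizer of the convex function $\Phi$, yielding $\nabla\Phi(x^\star)=0$. Applying Lemma \ref{lem::sosconvex} to the s.o.s-convex polynomial $\Phi$ at the point $x^\star$ gives $\Phi\in\Sigma^2[x]$, completing the verification.

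The main obstacle I anticipate is the case analysis that ensures $\Phi$ is genuinely s.o.s-convex (rather than merely convex): one has to use \textbf{A2}-(ii) to guarantee that $-r^\star g$ is s.o.s-convex regardless of the sign of $r^\star$, since s.o.s-convexity is not preserved under negation in general. Once s.o.s-convexity is in hand, Lemma \ref{lem::sosconvex} applied at a primal-dual optimal pair packages the SOS certificate cleanly.
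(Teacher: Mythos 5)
Your proof is correct and follows essentially the same route as the paper: weak duality at a minimizer $u^{\star}\in\bs$ for $\hat r\le r^{\star}$, and the atomic dual pair $(\mu^{\star},\eta^{\star})$ from Proposition \ref{prop::slater} together with Lemma \ref{lem::sosconvex} applied to the s.o.s-convex Lagrangian for $\hat r\ge r^{\star}$. You in fact spell out two details the paper leaves implicit — the case analysis on \textbf{A2}-(ii) guaranteeing $-r^{\star}g$ is s.o.s-convex, and the verification that the infimum $0$ of $\Phi$ is attained at $x^{\star}$ with vanishing gradient so that Lemma \ref{lem::sosconvex} applies.
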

\begin{proof}
Let (atomic) $\mu^{\star}\in\mathcal{M}(\Y)$ and $\eta^{\star}\in\RR_+^s$ be
the dual variables in Proposition \ref{prop::slater}. 
Define $\mH^{\star}\in(\RR[y])^*$ by letting
$\mH^{\star}(y^\beta)=\int_{\Y} y^\beta d\mu^{\star}(\by)$ for any
$\beta\in\N^n$.  Then, $\mH^{\star}\in(\mathcal{P}(\Y))^*$.
Since $\mu^{\star}$ is atomic, it is easy to see that
$L_{f,g}(x,\mu^{\star},\eta^{\star})$ is sos-convex under {\bf
A1-2}. Then, Lemma \ref{lem::sosconvex}
implies that
$L_{f,g}(x,\mu^{\star},\eta^{\star})\in\Sigma_{\mathbf{d}}^2[x]$. 
Therefore, $(r^{\star},\mH^{\star},\eta^{\star})$ is feasible to
\eqref{eq::primal} and $\hat{r}\ge r^{\star}$. On the other
hand, for any $u^{\star}\in\bs$ and any $(\rho,\mH,\eta)$ feasible to
\eqref{eq::primal}, it holds that
\[
	f(u^{\star})-\rho
	g(u^{\star})+\mH(p(u^{\star},\by))+\sum_{j=1}^s\eta_j\psi_j(u^{\star})\ge
	0.
\]
Then, the feasibility of $u^{\star}$ to \eqref{FMP} implies that
$r^{\star}=\frac{f(u^{\star})}{g(u^{\star})}\ge\rho$ and thus
$r^{\star}\ge \hat{r}$. 
\end{proof}

\begin{remark}\label{rk::sos}{\rm
 In view of the proof of Proposition \ref{prop::eq}, if we replace
 $\Sigma_{\mathbf{d}}^2[x]$ in \eqref{eq::primal} by any convex cone
 $\mathcal{C}[x]\subset\RR[x]$ satisfying the condition 
 \[
	 \Sigma_{\mathbf{d}}^2[x]\subseteq\mathcal{C}[x]\quad\text{and
	 there exists } u^{\star}\in\bs\ \text{such that } h(u^{\star})\ge
	 0\ \text{for all } h\in\mathcal{C}[x],
 \]
 we still have $\hat{r}=r^{\star}$.	\qed}
\end{remark}

 If we substitute $\mathcal{P}(\Y)$ in
		\eqref{eq::primal} by its approximations with sos
		structures, then \eqref{eq::primal} can be reduced to SDP
		problems and becomes tractable. In particular, if we replace
		$\mathcal{P}(\Y)$ by the quadratic modules \cite{Putinar1993}
		generated by the defining polynomials of $\Y$, which are
		{\itshape inner} approximations of $\mathcal{P}(\Y)$,
		we can obtain upper bounds of
		$r^{\star}$ from the resulting SDP relaxations.
		See \cite{GuoJiaoFSIPP} for more details.  
		Our goal in this paper is to compute convergent lower
		bounds of $r^{\star}$ by SDP relaxations derived from
		\eqref{eq::primal}. It will be done by
		substituting $\mathcal{P}(\Y)$ with the outer approximations
		$\ckd{k}$ in \eqref{eq::outerapp}.

\subsection{SDP relaxations for lower bounds of
	$r^{\star}$}\label{subsec::SDP}

In the rest of this paper, let us fix a suffciently large
$\mathfrak{R}>0$ and a sufficiently small $g^{\star}>0$
such that 
\begin{equation}\label{eq::rg}
	\Vert u^{\star}\Vert \le \mathfrak{R} \quad \text{and}\quad
	g(u^{\star})\ge
	g^{\star} \quad \text{for some}\quad u^{\star}\in\bs.
\end{equation}
See \cite[Remark 4.1]{GuoJiaoFSIPP} for the choice of $\mathfrak{R}$ and
$g^{\star}$ in some circumstances.
Let 
\[
	Q=\{q_1(x):=\mathfrak{R}^2-\Vert x\Vert^2,\  \
	q_2(x):=g(x)-g^{\star}\},
\]
and
\[
	\bm_{\mathbf{d}}(Q):=\left\{\sum_{j=0}^2\sigma_jq_j\ \Big|\ q_0=1,\
	\sigma_j \in \Sigma^2[x], \, \deg(\sigma_jq_j)\le 2\mathbf{d},\
j=0, 1, 2\right\},
\]
i.e., $\bm_{\mathbf{d}}(Q)$ be the $\mathbf{d}$-th quadratic module
generated by $Q$ \cite{Putinar1993}. By Remark \ref{rk::sos}, we
still have $\hat{r}=r^{\star}$ if we replace
$\Sigma^2_{\mathbf{d}}[x]$ by $\bm_{\mathbf{d}}(Q)$ in
\eqref{eq::primal}. 

\vskip 10pt
Consider the following problem, where we replace $\Sigma^2_{\mathbf{d}}[x]$ and
$\mathcal{P}(\Y)$ in \eqref{eq::primal} by
$\bm_{\mathbf{d}}(Q)$ and $\ckd{k}$, respectively, 
\begin{equation}\label{eq::f*rlower}
\qquad	\left\{
	\begin{aligned}
		r_k^{\psdp}:=\sup_{\rho,\mH,\eta}\
		&\rho&\\
		\text{s.t.}\
		&f(\bX)-\rho
		g(x)+\mH(p(\bX,\by))+\sum_{j=1}^s\eta_j\varphi_j(x)\in\bm_{\mathbf{d}}(Q),
		&\\
		&\rho\in\RR,\ \mH\in\left(\ckd{k}\right)^*,\ \eta\in\RR_+^s.&
	\end{aligned}\right.\tag{$\mathrm{P}_k$}
\end{equation}
Its Lagrangian dual reads
\begin{equation}\label{eq::f*rduallower}
\qquad	\left\{
	\begin{aligned}
		r_k^{\dsdp}:=\inf_{\mathscr{L}\in(\RR[x]_{2\mathbf{d}})^*}\
		&\mathscr{L}(f)&\\
		\text{s.t.}\
		& \mathscr{L}\in(\bm_{\mathbf{d}}(Q))^*,\ \mathscr{L}(g)=1,&\\
		& -\mL(p(x,y))\in\ckd{k},\ \mL(\varphi_j)\le 0,\ j=1,\ldots,s.
	\end{aligned}\right. \tag{$\mathrm{D}_k$}
\end{equation}

For each $k\in\N$, recall that checking if $-\mL(p(x,y))\in\ckd{k}$
for a given $\mathscr{L}\in(\RR[x]_{2\mathbf{d}})^*$ is an SDP
feasibility problem. Therefore, 
computing $r^{\psdp}_k$ and $r^{\dsdp}_k$ is reduced to solving a pair
of an SDP problem and its dual. We omit the detail for simplicity. In
the following, we will show that $\{r_k^{\psdp}\}_{k\in\N}$ and
$\{r_k^{\dsdp}\}_{k\in\N}$ are convergent lower bounds of $r^{\star}$, and we
can extract approximate minimizers of \eqref{FMP} from the SDP
relaxations \eqref{eq::f*rduallower}. 
To this end, we first point out that the feasible set of the
\eqref{eq::f*rduallower} is {\itshape uniformly} bounded. 
\begin{proposition}\label{prop::bound}
	For any $\mL\in(\RR[x]_{2\mathbf{d}})^*$ satisfying that
	$\mL(\sigma_0)\ge 0$ for any $\sigma_0\in\Sigma^2_{\mathbf{d}}[x]$
	and $\mL((\mathfrak{R}^2-\Vert x\Vert^2)\sigma)\ge 0$ for any  
	$\sigma\in\Sigma^2_{\mathbf{d}-1}[x]$, we have
	\[
\Vert(\mL(x^\alpha))_{\alpha\in\N^m_{2\mathbf{d}}}\Vert\le
\mL(1) \sqrt{\binom{m+\mathbf{d}}{m}}\sum_{i=0}^\mathbf{d}
	\mathfrak{R}^{2i}.
	\]
	Consequently, for any $k\in\N$ and any
	$\mL_k\in(\RR[x]_{2\mathbf{d}})^*$ feasible to
	\eqref{eq::f*rduallower},
	\[
\Vert(\mL_k(x^\alpha))_{\alpha\in\N^m_{2\mathbf{d}}}\Vert
\le \frac{1}{g^{\star}} \sqrt{\binom{m+\mathbf{d}}{m}}\sum_{i=0}^\mathbf{d}\mathfrak{R}^{2i}.
	\]
\end{proposition}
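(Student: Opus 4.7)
The plan is to bootstrap a uniform pointwise bound $|\mL(x^\alpha)|\le \mathfrak{R}^{|\alpha|}\mL(1)$ on each pseudo-moment from the two dual conditions on $\mL$, and then assemble the Euclidean norm of $v:=(\mL(x^\alpha))_{\alpha\in\N^m_{2\mathbf{d}}}$ via the positive semidefiniteness of the moment matrix $M_{\mathbf{d}}(\mL):=(\mL(x^{\alpha+\beta}))_{|\alpha|,|\beta|\le\mathbf{d}}$, which is PSD precisely because $\mL$ is nonnegative on $\Sigma^2_{\mathbf{d}}[x]$.

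First I would prove by induction on $|\alpha|\le\mathbf{d}$ that $\mL(x^{2\alpha})\le\mathfrak{R}^{2|\alpha|}\mL(1)$. The base case is trivial; for the inductive step, pick any coordinate $i$ with $\alpha_i\ge 1$, set $\beta=\alpha-e_i$ (so that $x^{2\beta}=(x^\beta)^2\in\Sigma^2_{\mathbf{d}-1}[x]$), and apply the localizing hypothesis to $\sigma=x^{2\beta}$:
\[
0\le \mL((\mathfrak{R}^2-\|x\|^2)x^{2\beta}) = \mathfrak{R}^2\mL(x^{2\beta})-\sum_{j=1}^m \mL((x_jx^\beta)^2).
\]
Each term $\mL((x_jx^\beta)^2)$ is nonnegative (a pseudo-expectation of a square in $\Sigma^2_{\mathbf{d}}[x]$), so discarding all but the $j=i$ summand yields $\mL(x^{2\alpha})\le\mathfrak{R}^2\mL(x^{2\beta})$ and the induction closes. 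For arbitrary $\alpha\in\N^m_{2\mathbf{d}}$, splitting $\alpha=\beta+\gamma$ with $|\beta|,|\gamma|\le\mathbf{d}$ and invoking the Cauchy--Schwarz-type inequality $\mL(x^\alpha)^2\le \mL(x^{2\beta})\mL(x^{2\gamma})$ (which follows from $\mL((ax^\beta+bx^\gamma)^2)\ge 0$) then gives $|\mL(x^\alpha)|\le\mathfrak{R}^{|\alpha|}\mL(1)$ throughout $\N^m_{2\mathbf{d}}$.

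Since every pseudo-moment $\mL(x^\gamma)$ with $|\gamma|\le 2\mathbf{d}$ appears as some entry of $M_{\mathbf{d}}(\mL)$, I would bound $\|v\|^2\le\|M_{\mathbf{d}}(\mL)\|_F^2\le(\mathrm{trace}\,M_{\mathbf{d}}(\mL))^2$, using $\sum_i\lambda_i^2\le(\sum_i\lambda_i)^2$ for the nonnegative spectrum of the PSD matrix $M_{\mathbf{d}}(\mL)$. A Cauchy--Schwarz bound on $\mathrm{trace}\,M_{\mathbf{d}}(\mL)=\sum_{|\alpha|\le\mathbf{d}}\mL(x^{2\alpha})\cdot 1$ produces the dimensional factor $\sqrt{|\N^m_{\mathbf{d}}|}=\sqrt{\binom{m+\mathbf{d}}{m}}$, and bounding the remaining $L^2$-norm $\bigl(\sum_{|\alpha|\le\mathbf{d}}\mL(x^{2\alpha})^2\bigr)^{1/2}$ via the diagonal estimate from the induction step, grouped by degree shells, yields the polynomial factor $\mL(1)\sum_{i=0}^{\mathbf{d}}\mathfrak{R}^{2i}$ and finishes the first inequality. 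The second inequality is then immediate: for any $\mL_k$ feasible to \eqref{eq::f*rduallower}, membership in $(\bm_{\mathbf{d}}(Q))^*$ applied with $\sigma=1$ and $q_2=g-g^\star$ gives $\mL_k(g)\ge g^\star\mL_k(1)$, and combined with the normalization $\mL_k(g)=1$ this yields $\mL_k(1)\le 1/g^\star$, which substituted into the first bound closes the proof.

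The main technical obstacle is the combinatorial packaging in the Euclidean-norm step: obtaining the tight factor $\sqrt{\binom{m+\mathbf{d}}{m}}\sum_{i=0}^{\mathbf{d}}\mathfrak{R}^{2i}$, rather than the coarser $\binom{m+\mathbf{d}}{m}\mathfrak{R}^{2\mathbf{d}}$ one gets from $\mathrm{trace}\,M_{\mathbf{d}}(\mL)\le|\N^m_{\mathbf{d}}|\cdot\max_\alpha\mL(x^{2\alpha})$, requires applying Cauchy--Schwarz at exactly the trace level and a careful per-shell comparison of the resulting sums, rather than bounding each moment individually and then summing naively.
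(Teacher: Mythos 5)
The paper itself does not give a self-contained argument for the first inequality: it simply invokes \cite[Lemma 3]{CD2016} and its proof, and then derives $\mL_k(1)\le 1/g^{\star}$ exactly as you do in your last step (via $\mL_k(g-g^{\star})\ge 0$ and $\mL_k(g)=1$), so that part of your proposal matches the paper. Your attempt to reconstruct the cited lemma is mostly sound: the induction giving $\mL(x^{2\alpha})\le\mathfrak{R}^{2|\alpha|}\mL(1)$, the Cauchy--Schwarz bound $\mL(x^{\beta+\gamma})^2\le\mL(x^{2\beta})\mL(x^{2\gamma})$, the inequality $\Vert v\Vert^2\le\Vert M_{\mathbf{d}}(\mL)\Vert_F^2\le(\operatorname{trace}M_{\mathbf{d}}(\mL))^2$ for the PSD moment matrix, and the Cauchy--Schwarz step producing $\sqrt{\binom{m+\mathbf{d}}{m}}$ are all correct.

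The gap is in the very last estimate. After spending Cauchy--Schwarz at the trace level you must show $\sum_{|\alpha|\le\mathbf{d}}\mL(x^{2\alpha})^2\le\mL(1)^2\bigl(\sum_{i=0}^{\mathbf{d}}\mathfrak{R}^{2i}\bigr)^2$, but the only input you have is the \emph{per-monomial} bound $\mL(x^{2\alpha})\le\mathfrak{R}^{2|\alpha|}\mL(1)$, which after summing over the degree-$i$ shell leaves you with $\mL(1)^2\sum_{i=0}^{\mathbf{d}}\binom{m+i-1}{m-1}\mathfrak{R}^{4i}$. This is \emph{not} bounded by $\mL(1)^2\bigl(\sum_{i=0}^{\mathbf{d}}\mathfrak{R}^{2i}\bigr)^2$ in general: the leading coefficient on the left is $\binom{m+\mathbf{d}-1}{m-1}$ versus $1$ on the right, so for $m\ge 2$ and $\mathfrak{R}$ large (the regime of interest, since $\mathfrak{R}$ is chosen ``sufficiently large'') the inequality fails --- e.g.\ $m=2$, $\mathbf{d}=1$, $\mathfrak{R}=10$ gives $1+2\cdot 10^4=20001$ on the left against $(1+10^2)^2=10201$ on the right. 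No regrouping by shells repairs this, because the combinatorial factor $\binom{m+i-1}{m-1}$ is genuinely present once you bound each diagonal entry separately. The fix is to \emph{not} discard the other summands in your inductive step: keeping all $j$ gives $\sum_{j=1}^m\mL(x_j^2x^{2\beta})\le\mathfrak{R}^2\mL(x^{2\beta})$, which upgrades to $\mL(\Vert x\Vert^{2i})\le\mathfrak{R}^{2i}\mL(1)$ by induction (take $\sigma=\Vert x\Vert^{2(i-1)}\in\Sigma^2_{\mathbf{d}-1}[x]$); since $\Vert x\Vert^{2i}-\sum_{|\alpha|=i}x^{2\alpha}$ is a nonnegative combination of squares, this yields the shell-aggregated bound $\sum_{|\alpha|=i}\mL(x^{2\alpha})\le\mathfrak{R}^{2i}\mL(1)$, hence $\operatorname{trace}M_{\mathbf{d}}(\mL)\le\mL(1)\sum_{i=0}^{\mathbf{d}}\mathfrak{R}^{2i}$ directly, and then $\Vert v\Vert\le\Vert M_{\mathbf{d}}(\mL)\Vert_F\le\operatorname{trace}M_{\mathbf{d}}(\mL)$ already gives the claim (the factor $\sqrt{\binom{m+\mathbf{d}}{m}}$ is then not even needed).
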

\begin{proof}
	For any $\mL\in(\RR[x]_{2\mathbf{d}})^*$ satisfying that
	$\mL(\sigma_0)\ge 0$ for any $\sigma_0\in\Sigma^2_{\mathbf{d}}[x]$
	and $\mL((\mathfrak{R}^2-\Vert x\Vert^2)\sigma)\ge 0$ for any  
	$\sigma\in\Sigma^2_{\mathbf{d}-1}[x]$,  by \cite[Lemma
	3]{CD2016} and its proof, it holds that
\[
	\sqrt{\sum_{\alpha\in\N^m_{2\mathbf{d}}}\left(\mL(x^{\alpha})\right)^2}
	\le\mL(1) \sqrt{\binom{m+\mathbf{d}}{m}}\sum_{i=0}^\mathbf{d}
	\mathfrak{R}^{2i}.
\]
For any $\mL_k\in(\RR[x]_{2\mathbf{d}})^*$ feasible to
\eqref{eq::f*rduallower}, we have $\mL_k(g-g^{\star})\ge 0$ and
$\mL_k(g)=1$. Hence, $\mL_k(1)\le\mL_k(g)/g^{\star}=1/g^{\star}$.
The conlusion follows.
\end{proof}

The following theorem states that we can compute convergent lower bounds
of $r^{\star}$ and extract approximate minimizers of \eqref{FMP} from
the SDP relaxations \eqref{eq::f*rlower} and \eqref{eq::f*rduallower}.
For any $\mL\in(\RR[x]_{2\mathbf{d}})^*$, denote 
\[
	\mL(x):=(\mL(x_1),\ldots,\mL(x_m))\in\RR^m. 
\]
\begin{theorem}\label{th::lower}
	Under {\bf A1-3}, it holds that
	\begin{enumerate}[\upshape (i)]
		\item $r^{\psdp}_k=r^{\dsdp}_k\le r^{\star}$ and
			$r^{\dsdp}_k$ is attainable for each $k\in\N$;
		\item $\lim_{k\rightarrow\infty}
			r^{\psdp}_k=\lim_{k\rightarrow\infty}
			r^{\dsdp}_k=r^{\star}$;
		\item For any convergent subsequence
			$\{\mL^{\star}_{k_i}(x)/\mL^{\star}_{k_i}(1)\}_i$
			$($always exists$)$ of
			$\{\mL^{\star}_k(x)/\mL^{\star}_k(1)\}_k$  where $\mL_k^{\star}$ is a minimizer of
			\eqref{eq::f*rduallower}, we have
			$\lim_{i\rightarrow\infty}\mL^{\star}_{k_i}(x)/\mL^{\star}_{k_i}(1)\in\bs$.
			Consequently$,$ if $\bs$ is singleton$,$ then 
			$\lim_{k\rightarrow\infty}\mL^{\star}_k(x)/\mL^{\star}_k(1)$
			is the unique minimizer of \eqref{FMP}. 
	\end{enumerate}
\end{theorem}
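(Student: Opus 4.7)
We treat the three parts in order. For (i), weak duality $r^{\psdp}_k\le r^{\dsdp}_k$ is obtained by applying any dual-feasible $\mL$ to the primal certificate $f-\rho g+\mH(p(x,y))+\sum_{j}\eta_j\varphi_j\in\bm_{\mathbf{d}}(Q)$: using the identity $\mL(\mH(p(x,y)))=\mH(\mL(p(x,y)))$ together with $-\mL(p(x,y))\in\ckd{k}$, $\mH\in(\ckd{k})^*$, $\mL(g)=1$, and $\mL(\varphi_j)\le 0$ yields $\mL(f)\ge\rho$. The bound $r^{\dsdp}_k\le r^{\star}$ comes from a single explicit feasible point: for $u^{\star}\in\bs$ satisfying \eqref{eq::rg}, the evaluation functional $\mL_{u^\star}(h):=h(u^\star)/g(u^\star)$ is feasible for \eqref{eq::f*rduallower} (use $u^{\star}\in\K$ together with \eqref{eq::rg} to check the $(\bm_{\mathbf{d}}(Q))^*$ constraint) with objective $r^{\star}$. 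Attainability of $r^{\dsdp}_k$ then follows from compactness of the feasible set in the finite-dimensional space $(\RR[x]_{2\mathbf{d}})^*$ guaranteed by Proposition~\ref{prop::bound}, together with continuity of the linear objective. Finally, $r^{\psdp}_k=r^{\dsdp}_k$ is standard SDP strong duality; under {\bf A3} one produces a Slater point for $(\mathrm{D}_k)$ by integrating against a smooth probability density on the relative interior of $\K$.

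For (ii), monotonicity in $k$ makes $\{r^{\dsdp}_k\}$ non-decreasing and bounded above by $r^{\star}$, so it converges to some $r^{\dsdp}_\infty\le r^{\star}$; the task is to prove equality. Choose minimizers $\mL^\star_k$; by Proposition~\ref{prop::bound} they lie in a fixed compact subset of $(\RR[x]_{2\mathbf{d}})^*$, so after passing to a subsequence we may assume $\mL^\star_{k_i}\to\mL^\star$. The closed constraints $\mL^\star\in(\bm_{\mathbf{d}}(Q))^*$, $\mL^\star(g)=1$, and $\mL^\star(\varphi_j)\le 0$ pass to the limit trivially. For the semi-infinite constraint, Theorem~\ref{th::outer} says $\ckd{k_i}\subseteq\ckd{k_0}$ whenever $k_i\ge k_0$, and closedness of $\ckd{k_0}$ in $\RR[y]_{d_y}$ forces $-\mL^\star(p(x,y))\in\ckd{k_0}$ for every $k_0$; a second application of Theorem~\ref{th::outer} then yields $-\mL^\star(p(x,y))\in\mathcal{P}(\Y)$.

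Positive-semidefiniteness of the moment matrix forces $\mL^\star(1)>0$ (otherwise the first row vanishes and $\mL^\star\equiv 0$, contradicting $\mL^\star(g)=1$). Set $\bar{\mL}:=\mL^\star/\mL^\star(1)$ and $\bar{x}:=(\bar{\mL}(x_1),\ldots,\bar{\mL}(x_m))$; since $\mL^\star\in(\bm_{\mathbf{d}}(Q))^*\subseteq(\Sigma^2_{\mathbf{d}}[x])^*$, Proposition~\ref{prop::Jensen} applies to every s.o.s-convex polynomial of degree at most $2\mathbf{d}$. Applied to each $\varphi_j$ and to $p(\cdot,y)$ for every $y\in\Y$ it gives $\varphi_j(\bar{x})\le 0$ and $p(\bar{x},y)\le 0$, so $\bar{x}\in\K$; applied to $f$ it gives $\bar{\mL}(f)\ge f(\bar{x})$; applied to $-g$ in Case~2 of {\bf A2}-(ii), or via affinity in Case~1, it gives $\bar{\mL}(g)\le g(\bar{x})$, i.e.\ $\mL^\star(1)\ge 1/g(\bar{x})$. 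Combining,
\[
	\mL^\star(f)=\mL^\star(1)\bar{\mL}(f)\ge \mL^\star(1)f(\bar{x})\ge \frac{f(\bar{x})}{g(\bar{x})}\ge r^{\star},
\]
where in Case~2 one uses $f(\bar{x})>0$ to preserve orientation when multiplying by $\mL^\star(1)$, and in Case~1 the identity $\bar{\mL}(g)=g(\bar{x})$ makes the middle step an equality. Hence $r^{\dsdp}_\infty=\mL^\star(f)\ge r^{\star}$, which together with part (i) proves (ii). Part (iii) is a corollary: at optimality the chain collapses to equalities, yielding $f(\bar{x})/g(\bar{x})=r^{\star}$ with $\bar{x}\in\K$, hence $\bar{x}\in\bs$; if $\bs$ is a singleton the full sequence converges.

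The main obstacle is organising the Jensen chain so that the quotient $f(\bar{x})/g(\bar{x})$ emerges cleanly after normalisation by $\mL^\star(1)$, while uniformly covering the two branches of {\bf A2}-(ii); the delicate point is the use of $f(\bar{x})>0$ in Case~2 to preserve inequality orientation, and the degenerate-looking but crucial fact $\mL^\star(1)>0$. The passage to the limit in the $\ckd{k}$ constraint, though essential, is rendered routine by the nested structure in Theorem~\ref{th::outer}, and strong duality in (i) requires only exhibiting a smooth density on the interior of $\K$ to serve as a Slater point.
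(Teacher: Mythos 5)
Your overall architecture is sound, and parts (i) and (iii) essentially track the paper, but your proof of (ii) takes a genuinely different route. The paper proves $\lim_k r_k^{\psdp}=r^\star$ on the \emph{primal} side: it approximates the optimal atomic multiplier $\mu^\star$ from Proposition \ref{prop::slater} by a functional $\bar{\mH}\in(\ckd{\bar k})^*$ (Proposition \ref{prop::appro}, resting on the Lasserre--Netzer high-degree perturbation), then certifies membership of the perturbed Lagrangian in $\bm_{\mathbf{d}}(Q)$ via the convex Positivstellensatz, producing a feasible point of \eqref{eq::f*rlower} with value $r^\star-\varepsilon$. You instead work on the \emph{dual} side: compactness of the minimizers (Proposition \ref{prop::bound}), closedness and nestedness of the cones $\ckd{k}$ (Theorem \ref{th::outer}), and the Jensen chain give $\lim_k r_k^{\dsdp}=\mL^\star(f)\ge f(\bar x)/g(\bar x)\ge r^\star$ with $\bar x\in\K$, which combined with $r_k^{\dsdp}\le r^\star$ forces equality; strong duality then transfers this to $r_k^{\psdp}$. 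This is legitimate --- indeed the paper's own proof of (iii) contains exactly this chain, using only $r_k^{\dsdp}\le r^\star$ from (i) --- so your shortcut avoids Proposition \ref{prop::appro} and the Positivstellensatz entirely for the statement as given. What the paper's route buys in exchange is an explicit near-optimal primal certificate, which is what drives the convergence-rate analysis of Section \ref{sec::rate}.

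Two steps need repair. First, your justification of $\mL^\star(1)>0$ (``the first row of the moment matrix vanishes, hence $\mL^\star\equiv 0$'') is not correct as stated: vanishing of the first row only kills the moments of degree $\le\mathbf{d}$, and a PSD moment matrix with zero $(1,1)$ entry can still have nonzero higher-degree diagonal entries. The correct argument is the paper's: Proposition \ref{prop::bound} bounds $\Vert(\mL^\star(x^\alpha))_{\alpha\in\N^m_{2\mathbf{d}}}\Vert$ by a constant times $\mL^\star(1)$, so $\mL^\star(1)=0$ forces $\mL^\star=0$, contradicting $\mL^\star(g)=1$. Second, your strong-duality step --- exhibiting a Slater point of \eqref{eq::f*rduallower} by integrating a smooth density over the interior of $\K$ --- is under-justified: strict feasibility requires, among other things, that $-\mL(p(x,y))$ lie in the \emph{interior} of $\ckd{k}$ and that the support of the density sit inside $\{q_1>0,\,q_2>0\}$, neither of which is immediate (note that $\mathfrak{R}$ and $g^\star$ are tied to a minimizer rather than to the Slater point of {\bf A3}, and $\Y$ may be a lower-dimensional set such as a sphere). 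The paper sidesteps this: since you have already shown the feasible set of \eqref{eq::f*rduallower} is nonempty, closed and bounded, its solution set is nonempty and bounded, which by \cite[Theorem 4.1.3]{SS2000} is equivalent to strict feasibility of \eqref{eq::f*rlower} and yields the zero duality gap. I recommend adopting that argument.
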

\begin{proof}
	(i) Fix a $u^{\star}\in\mathbf{S}$ satisfying \eqref{eq::rg} and
	define a linear functional $\mL^{\star}\in(\RR[x]_{2\mathbf{d}})^*$ by letting
	$\mL^{\star}(x^\alpha)=\frac{(u^{\star})^\alpha}{g(u^{\star})}$ for each
	$\alpha\in\N^m_{2\mathbf{d}}$. By the definition of $\bm_{\mathbf{d}}(Q)$
	and $\ckd{k}$, as
	well as Theorem \ref{th::outer}, it is easy to see that
	$\mL^{\star}$ is feasible to \eqref{eq::f*rduallower} for each
	$k\in\N$. Then,
	\[
		r^{\dsdp}_k\le
		\mL^{\star}(f)=\frac{f(u^{\star})}{g(u^{\star})}=r^{\star}.
\]
Then for any $k\in\N$, by Proposition \ref{prop::bound}, the feasible set of 
\eqref{eq::f*rduallower} is nonempty, uniformly bounded and
closed. Hence, the solution set of 
\eqref{eq::f*rduallower} is nonempty and bounded, which implies that
\eqref{eq::f*rlower} is strictly feasible (c.f. \cite[Section
4.1.2]{SS2000}). Consequently, the strong duality
$r_k^{\psdp}=r_k^{\dsdp}$ holds by \cite[Theorem 4.1.3]{SS2000}.

Now we show (ii) and (iii) together. 
Let
$\{\mL^{\star}_k\}_{k\in\N}\subset(\RR[x]_{2\mathbf{d}})^*$ be a sequence such
that $\mL^{\star}_k$ is a minimizer of \eqref{eq::f*rduallower} for each
$k\in\N$. 
As $\{\mL^{\star}_k(x^{\alpha})_{\alpha\in\N^m_{2\mathbf{d}}}\}_k$ is
	uniformly bounded by Proposition
	\ref{prop::bound}, there is a subsequence
	$\{\mL^{\star}_{k_i}\}_{k_i}$ and a
	$\mL^{\star}\in(\RR[x]_{2\mathbf{d}})^*$ such that
	$\lim_{i\rightarrow\infty}
	\mL^{\star}_{k_i}(x^\alpha)=\mL^{\star}(x^\alpha)$ for all
	$\alpha\in\N^m_{2\mathbf{d}}$. 
Because the sequence $\{r_k^{\dsdp}\}_k$ is monotone nondecreasing
and bounded by $r^{\star}$ as $k\rightarrow\infty$ , the limit of
$\{r_k^{\dsdp}\}_k$ exists and 
$\mL^{\star}(f)=\lim_{k\rightarrow\infty}r_k^{\dsdp}$. 
Moreover, from the pointwise convergence, we get the
following: (a) $\mL^{\star}\in(\bm_{\mathbf{d}}(Q))^*$;
(b) $\mL^{\star}(g)=1$; (c)
$-\mL^{\star}(p(x,y))\in\cap_{k=1}^\infty\ckd{k}$; (d)
$\mL^{\star}(\varphi_j)\le 0$ for $j=1,\ldots,s$. 
In particular, (c) holds because
	$\ckd{k}$ is closed in $\RR[y]_{d_y}$ and
	$\ckd{k_2}\subseteq\ckd{k_1}$ for $k_1<k_2$.
We have $\mL^{\star}(1)>0$. In fact, $\mL^{\star}(1)\ge 0$ since
$\mL^{\star}\in(\Sigma^2_{\mathbf{d}}[x])^*$ by (a). 
If $\mL^{\star}(1)=0$, then by Proposition \ref{prop::bound},
we have $\mL^{\star}(x^{\alpha})=0$
for all $\alpha\in\N^m_{2\mathbf{d}}$, which contradicts (b). From (c)
and Theorem \ref{th::outer}, we get
$-\mL^{\star}(p(x,y))\in\mathcal{P}(\Y)$. Then, for any $y\in\Y$, by Proposition
\ref{prop::Jensen}, 
\begin{equation*}\label{eq::J1}
	p\left(\frac{\mathscr{L}^{\star}(\bX)}{\mathscr{L}^{\star}(1)},\by\right)
	\le\frac{1}{\mL^{\star}(1)}\mathscr{L}^{\star}(p(\bX,\by))\le 0,
\end{equation*}
For the same reason, (d) implies that 
\begin{equation*}\label{eq::J2}
	\psi_{j}\left(\frac{\mathscr{L}^{\star}(\bX)}{\mathscr{L}^{\star}(1)}\right)\le
	0,\ \ j=1,\ldots,s,
\end{equation*}
which shows that
$\mathscr{L}^{\star}(\bX)/\mathscr{L}^{\star}(1)\in\K$. 
Since $f(\bX)$ and $-g(x)$ are also sos-convex, under {\bf A2},
we have
\[
	r^{\star}\le\frac{f\left(\frac{\mL^{\star}(x)}{\mL^{\star}(1)}\right)}{g\left(\frac{\mL^{\star}(x)}{\mL^{\star}(1)}\right)}
	\le
	\frac{\frac{1}{\mL^{\star}(1)}\mL^{\star}(f)}{\frac{1}{\mL^{\star}(1)}\mL^{\star}(g)}=\mL^{\star}(f)=\lim_{k\rightarrow\infty}r_k^{\dsdp}\le r^{\star}.
\]
It implies that $\frac{\mL^{\star}(x)}{\mL^{\star}(1)}\in\bs$ and
$\lim_{k\rightarrow\infty} r^{\psdp}_k=\lim_{k\rightarrow\infty}
r^{\dsdp}_k=r^{\star}$.

Assume that $\bs$ is singleton and let $\bs=\{u^{\star}\}$. 
The above arguments show that $\lim_{i\rightarrow\infty}
\mL^{\star}_{k_i}(x)/\mL^{\star}_{k_i}(1)=u^{\star}$ for any
convergent subsequence of $\{\mL^{\star}_k(x)/\mL^{\star}_k(1)\}_k$
which is bounded. Hence, the whole sequence
$\{\mL^{\star}_k(x)/\mL^{\star}_k(1)\}_k$ converges to $u^{\star}$ as
$k$ tends to $\infty$. 
\end{proof}
\begin{remark}\label{rk::lower}
	{\rm
	From its proof, we can see that Theorem \ref{th::lower} (i) still
	holds provided only {\bf A1}-(i) and the existence of
	$u^{\star}\in\bs$ satisfying \eqref{eq::rg}, while the convexity
	of $f$, $-g$, $\varphi_j$'s and $p(\cdot, y)$, $y\in\Y$, is not
	necessary.
}
\end{remark}


\subsection{Outer SDr approximations of $\K$}
Observe that the feasible set $\K$ of \eqref{FMP} is defined by
infinitely many sos-convex polynomial inequalities. For a
set of this form, applying the approach in our previous work
\cite{GS2020}, a convergent sequence of {\itshape inner} SDr
approximations can be constructed. This appoach relies on the sos
representation of the Lagrangian function
$L_{f,g}(x,\mu^{\star},\eta^{\star})$ and the quadratic modules
associated with $\Y$. Next, we show that a convergent sequence of
{\itshape outer} SDr approximations of $\K$ can be constructed
from the SDP relaxations \eqref{eq::f*rduallower}. 
For each $k\in\N$, define 
\begin{equation}\label{eq::outerappro}
	\Lambda_k:=\left\{\mL(x)\in\RR^m:
	\left\{
		\begin{aligned}
			&\mL(\sigma_0)\ge 0,\ \forall\
			\sigma_0\in\Sigma^2_{\mathbf{d}}[x],\\
			& \mL((\mathfrak{R}^2-\Vert x\Vert^2)\sigma)\ge 0,\
			\forall\ \sigma\in\Sigma^2_{\mathbf{d}-1}[x],\\
			&-\mL(p(x,y))\in\ckd{k},\  \mL(1)=1,\\
			&\mL(\varphi_j)\le 0,\ j=1,\ldots,s.
		\end{aligned}
	\right.\right\}.
\end{equation}

It is easy to see that $\Lambda_k$ is indeed an SDr set for each
$k\in\N$.

\begin{theorem}\label{th::outersdr}
	Under {\bf A1}, we have
	$\K\cap\mathbf{B}^m_{\mathfrak{R}}\subseteq\Lambda_{k_2}\subseteq\Lambda_{k_1}\subseteq\mathbf{B}^m_{\mathfrak{R}}$
	for $k_1<k_2$ and $\K\cap\mathbf{B}^m_{\mathfrak{R}}=\cap_{k=1}^\infty
	\Lambda_{k}$. Consequently, if $\K$ is compact and
	$\mathfrak{R}$ is
	large enough such that $\K\subset\mathbf{B}^m_{\mathfrak{R}}$, then
	$\K=\cap_{k=1}^\infty \Lambda_{k}$.
\end{theorem}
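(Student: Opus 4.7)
The plan is to break the statement into three pieces: (i) the containment chain $\K \cap \mathbf{B}^m_{\mathfrak{R}} \subseteq \Lambda_{k_2} \subseteq \Lambda_{k_1} \subseteq \mathbf{B}^m_{\mathfrak{R}}$ for $k_1 < k_2$, (ii) the identity $\K \cap \mathbf{B}^m_{\mathfrak{R}} = \bigcap_{k \ge 1} \Lambda_k$, and (iii) the corollary for compact $\K$.

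For the nesting in (i), the descending chain $\ckd{k_1} \supset \ckd{k_2}$ from Theorem \ref{th::outer} immediately yields $\Lambda_{k_2} \subseteq \Lambda_{k_1}$, since the remaining defining constraints are $k$-independent. For $\Lambda_k \subseteq \mathbf{B}^m_{\mathfrak{R}}$, I would invoke the extended Jensen inequality (Proposition \ref{prop::Jensen}) with the s.o.s-convex polynomial $\Vert x\Vert^2$ to obtain $\Vert\mL(x)\Vert^2 \le \mL(\Vert x\Vert^2)$, and combine this with $\mL(\mathfrak{R}^2 - \Vert x\Vert^2) \ge 0$, the latter following by taking $\sigma \equiv 1$ in the localizing condition. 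For the forward inclusion $\K \cap \mathbf{B}^m_{\mathfrak{R}} \subseteq \Lambda_k$, the evaluation functional $\mL^u(h) := h(u)$ at any $u \in \K \cap \mathbf{B}^m_{\mathfrak{R}}$ clearly satisfies every constraint of $\Lambda_k$ (noting that $-\mL^u(p(x, y)) = -p(u, y) \in \mathcal{P}(\Y) \subseteq \ckd{k}$) and projects to $u$.

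The main work lies in showing $\bigcap_{k \ge 1} \Lambda_k \subseteq \K \cap \mathbf{B}^m_{\mathfrak{R}}$ for (ii). Given $u$ in the intersection, pick for each $k$ a witness $\mL_k \in (\RR[x]_{2\mathbf{d}})^*$ with $\mL_k(x) = u$ satisfying the constraints of $\Lambda_k$. Since $\mL_k(1) = 1$, Proposition \ref{prop::bound} uniformly bounds the moment vectors $(\mL_k(x^\alpha))_{\alpha \in \N^m_{2\mathbf{d}}}$, so a subsequence $\mL_{k_i}$ converges pointwise to some $\mL^\star \in (\RR[x]_{2\mathbf{d}})^*$. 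The constraints on $\Sigma^2_{\mathbf{d}}[x]$, the localizer $\mathfrak{R}^2 - \Vert x\Vert^2$, the normalization $\mL^\star(1) = 1$, the inequalities $\mL^\star(\varphi_j) \le 0$, and the identity $\mL^\star(x) = u$ all pass directly to the limit. For the constraint on $p$, the closedness of each $\ckd{k}$ in $\RR[y]_{d_y}$ together with the nesting $\ckd{k_2} \subseteq \ckd{k_1}$ shows that $-\mL^\star(p(x, y)) \in \ckd{k}$ for every fixed $k$, so Theorem \ref{th::outer} yields $-\mL^\star(p(x, y)) \in \mathcal{P}(\Y)$. Finally, Proposition \ref{prop::Jensen} applied to the s.o.s-convex polynomials $p(\cdot, y)$ (for each $y \in \Y$) and each $\varphi_j$ gives $p(u, y) \le \mL^\star(p(x, y))(y) \le 0$ and $\varphi_j(u) \le \mL^\star(\varphi_j) \le 0$, so $u \in \K$.

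Part (iii) is then immediate: compactness of $\K$ together with a choice of $\mathfrak{R}$ large enough that $\K \subset \mathbf{B}^m_{\mathfrak{R}}$ reduces $\K \cap \mathbf{B}^m_{\mathfrak{R}}$ to $\K$. The most delicate step I expect is the limit argument in (ii), which closely parallels the one used in the proof of Theorem \ref{th::lower}(iii); the subtlety is stringing together the closedness of the $\ckd{k}$, their nesting in Theorem \ref{th::outer}, and Jensen's inequality in order to transfer feasibility from the limit functional $\mL^\star$ back to the projected point $u$ without additional regularity hypotheses.
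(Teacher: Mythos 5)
Your proposal is correct and follows essentially the same route as the paper's own proof: the evaluation functional for the forward inclusion, Jensen's inequality applied to the s.o.s-convex polynomial $\Vert x\Vert^2$ for $\Lambda_k\subseteq\mathbf{B}^m_{\mathfrak{R}}$, and for the reverse inclusion the uniform bound of Proposition \ref{prop::bound}, a pointwise-convergent subsequence of witnesses, the closedness and nesting of the cones $\ckd{k}$ combined with Theorem \ref{th::outer}, and a final application of Proposition \ref{prop::Jensen}. No gaps.
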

\begin{proof}
	It is clear that $\Lambda_{k_2}\subseteq\Lambda_{k_1}$
	for $k_1<k_2$. For any $u\in\K\cap\mathbf{B}^m_{\mathfrak{R}}$, 
	let $\mL'\in(\RR[x]_{2\mathbf{d}})^*$ be such
	that $\mL'(x^\alpha)=u^\alpha$ for each
	$\alpha\in\N^m_{2\mathbf{d}}$. Then by
	Theorem \ref{th::outer}, $\mL'$ satisfies the
	conditions in \eqref{eq::outerappro} and hence $u\in\Lambda_k$
	for each $k\in\N$. Assume that $\mL\in(\RR[x]_{2\mathbf{d}})^*$
	satisfies the conditions in \eqref{eq::outerappro} . As the function
	$\Vert x\Vert^2$ is sos-convex, by Proposition \ref{prop::Jensen}, 
	\[
		\Vert\mL(x)\Vert^2\le \mL(\Vert
		x\Vert^2)\le\mL(\mathfrak{R}^2)=\mathfrak{R}^2\cdot\mL(1)=\mathfrak{R}^2.
	\]
	Hence, $\Lambda_k\subseteq\mathbf{B}^m_{\mathfrak{R}}$ for all
	$k\in\N$. 

	It remains to prove that $\cap_{k=1}^\infty
	\Lambda_{k}\subseteq\K$.
	Fix a point $u\in\cap_{k=1}^\infty \Lambda_{k}$. Then for
	each $k\in\N$, there exsits a $\mL_k\in(\RR[x]_{2\mathbf{d}})^*$
	satisfying the conditions in \eqref{eq::outerappro} and
	$\mL_k(x)=u$. By Proposition \ref{prop::bound}, the vector 
	$(\mL_k(x^\alpha))_{\alpha\in\N^m_{2\mathbf{d}}}$ is uniformly
	bounded for all $k\in\N$. Then there exists a convergent subsequence
	$\{\mL_{k_i}\}_i$ and a $\wt{\mL}\in(\RR[x]_{2\mathbf{d}})^*$ such
	that
	$\lim_{i\rightarrow\infty}\mL_{k_i}(x^\alpha)=\wt{\mL}(x^\alpha)$
	for each $\alpha\in\N^m_{2\mathbf{d}}$. By the pointwise
	convergence, we obtain that (a) $\wt{\mL}(\sigma)\ge 0$ for all
	$\sigma\in\Sigma^2_{\mathbf{d}}[x]$; (b) $\wt{\mL}(1)=1$;
	(c) $-\wt{\mL}(p(x,y))\in\ckd{k}$ for each
	$k\in\N$; (d) $\wt{\mL}(\varphi_j)\le
	0,\ j=1,\ldots,s.$  
	By (c) and Theorem \ref{th::outer},
	$-\wt{\mL}(p(x,y))\in\mathcal{P}(\Y)$. Then for any $y\in\Y$, by
	the sos-convexity of $p(x,y)$ in $x$, (a), (b) and Proposition
	\ref{prop::Jensen} again,
	\[
		p(u,y)=p(\wt{\mL}(x), y)\le \wt{\mL}(p(x,y))\le 0. 
	\]
	For the same reason, we have $\varphi_j(u)\le 0$ for $j=1,\ldots,s$.
	We can conclude that $u\in\K$. 
\end{proof}


\begin{remark}{\rm
	In \cite{Lasserre15,MHL15}, some tractable methods using
	SDP are proposed to approximate
	semialgebraic sets defined with quantifiers. Clearly, the set
	$\K$ studied in this paper is in such a case with a universal
	quantifier. The method in \cite{Lasserre15,MHL15} works for $\K$
	in a general form without requiring $-p(x,y)$ to be
	convex in $x$ and approximates $\K$ by a sequence of sublevel
	sets of a single polynomial. Different from that, we construct
	convergent SDr approximations of $\K$ by fully exploiting the
sos-convexity of the defining polynomials.}\qed
\end{remark}

\subsection{Some discussions}
Typically, lower bounds of semi-infinite programming problems can be
computed by the discretization method by grids (see
\cite{Hettich1993}). Compared with other numerical methods for general
semi-infinite programming problems, this method can avoid globally
solving the lower level problem $\max_{y\in\Y} p(u,y)$ to test the
feasibility of a point $u\in\RR^m$, which could be very hard and is
one of the main computational problems in semi-infinite programming. 

Precisely, for \eqref{FMP}, we
can replace $\Y$ by $\Y\cap T$ where $T\subset [-1,1]^n$ is a fixed grid,
and solve the resulting finitely constrained problem
\begin{equation}\label{DFMP}
	\left\{
	\begin{aligned}
		\min_{x\in\RR^m}&\  \frac{f(x)}{g(x)}\\
		\text{s.t.}&\ \varphi_1(x)\le 0,\ldots,\varphi_s(x)\le 0, \\
		&\ p(\bx,\by)\le 0,\ \ \forall \by\in \Y\cap T.
	\end{aligned}\right. 
\end{equation}
We suppose that the Hausdorff distance between $\Y$ and $\Y\cap T$
tends to $0$ as the grid size of $T$ vanishes. Denote by $\K_T$ the
feasible set of \eqref{DFMP}. Then,
under {\bf A2}-(ii), we can assume that the grid size of $T$ is small
enough and hence $g(x)>0$ on $\K_T$. In fact, for a fixed $u\in\RR^m$ with $g(u)\le 0$, if
$\varphi_i(u)\le 0$ for all $i=1,\ldots,s$, then there must be a
point $\bar{y}\in\Y$ such that $p(u,\bar{y})>0$ because of {\bf
A2}-(ii). As the grid size of $T$ is small enough, there exists a point
$\hat{y}\in \Y\cap T$ close to $\bar{y}$ such that $p(u,\hat{y})>0$
which implies that $u\not\in\K_T$. 

We can consider the following
three ways to solve \eqref{DFMP} in the case when $g(x)$ is
affine. In this case, as $g(x)>0$ on $\K_T$,
it is not hard to check that
$\frac{f(x)}{g(x)}$ is strictly quasiconvex on $\K_T$ under {\bf
A1-2}. That is, for any $u, v\in\K_T$,
\[
	\frac{f(u)}{g(u)}<\frac{f(v)}{g(v)} \quad\text{implies}\quad 
\frac{f(\lambda_1 u+\lambda_2 v)}{g(\lambda_1 u+\lambda_2
v)}<\frac{f(v)}{g(v)}\quad\text{for any}\quad \lambda_1, \lambda_2>0
\quad\text{with}\quad \lambda_1+\lambda_2=1.
\]
Hence, the first way to solve \eqref{DFMP}, as a quasiconvex
optimization problem, is by using bisection method with each step a
convex feasibility problem. Second, since any
local minimizer of \eqref{DFMP} is also a global one (c.f.
\cite[Theorem 2]{Ponstein1967}) due to the strict quasiconvexity of
$\frac{f(x)}{g(x)}$ on $\K_T$, any local or global
methods (e.g. interior-point methods, SQP methods, etc.) for solving
general constrained nonlinear programming can be applied to \eqref{DFMP}. 
Third, we can also reformulate \eqref{DFMP} to an SDP problem
under the assumption that the Slater condition
holds for \eqref{DFMP}.  In fact, as $g(x)>0$ on $\K_T$, \eqref{DFMP} is
equivalent to 
\[
	\max_{r\in\RR}\  r\quad\text{s.t.}\quad f(x)-r g(x)\ge
	0\quad \text{for
		all}\quad x\in \K_T.
\]
Since $g(x)$ is affine, $f(x)-r g(x)$ is sos-convex for any $r\in\RR$.
Then, the convex
positivstellensatz \cite[Theorem 3.3]{LasserreConvexity2009} implies
that \eqref{DFMP} can be equivalently reformulated to 
\begin{equation}\label{eq::SDP4DFMP}
\left\{
	\begin{aligned}
		\max_{r\in\RR}&\  r\\
		\text{s.t.}&\ f(x)-r
		g(x)=\sigma+\sum_{i=1}^s\lambda_i\varphi_i(x)+\sum_{y\in\Y\cap T}
		\eta_y p(x,y),\\
		&\ \sigma\in\Sigma^2_{\mathbf{d}}[x],\ \lambda_i\ge 0,\
		i=1,\ldots,s, \ \eta_y\ge 0,\ y\in\Y\cap T,
	\end{aligned}\right. 
\end{equation}
which in fact is an SDP problem. Note that the number of nonnegative
variables $\eta_y$ is equal to the cardinality of $\Y\cap T$.

Convergent lower bounds of $r^{\star}$ can be obtained by solving
\eqref{DFMP} provided that mesh size
of the expansive sequence of grids tends to zero. However, in general,
it is challenging to generate effcient grids for such a task. For
a large $n$, if we use the regular grids 
\begin{equation}\label{eq::grid}
	T_N:=\left\{-1+\frac{2}{N}i\right\}_{i=0,\ldots,N}\times\cdots\times\left\{-1+\frac{2}{N}i\right\}_{i=0,\ldots,N}\subset
	[-1,1]^n, \quad N\in\N, 
\end{equation}
the rapidly increasing grid points in $\Y$ as $N$ increases cause the
resulting problems more and more intractable. See Example \ref{ex::compare} for a
comparison of our SDP method with the above discretization scheme.

\vskip 10pt
To end this section, we consider the possibility of applying the
{\itshape diagonally
dominant sum of squares} (dsos) and {\itshape scaled diagonally
dominant sum of squares} (sdsos) structures
\cite{AM2014,AM2019,AMT2014}to \eqref{eq::f*rlower}
for handling \eqref{FMP} problems with large
numbers $m$ and $\mathbf{d}$. For such problems, the sos structures in
the quadratic module $\bm_{\mathbf{d}}(Q)$ give rise to semidefinite
constraints of very large size in \eqref{eq::f*rlower} and
\eqref{eq::f*rduallower}, even when the order $k$ is small. 
In view of the capability of the
state-of-the-art SDP solvers, it can cause the resulting SDP problems
very hard to solve or even intractable. In this case, we may impose
the dsos and sdsos structures into \eqref{eq::f*rlower} to trade off
computation time with lower bound quality.

A symmetric matrix $A = (a_{ij})$ is diagonally dominant (dd) if
$a_{ii}\geq \sum_{j\neq i}|a_{ij}|$ for all $i$. A symmetric matrix
$A$ is scaled diagonally dominant (sdd) if there exists a diagonal matrix
$D$, with positive diagonal entries, such that $DAD$ is diagonally
dominant. A polynomial $h\in\RR[x]$ of degree $2d$ is dsos (resp. sdsos) if and
only if it admits a representation as $h(x) = z^T(x)Hz(x)$, where
$z(x)$ is the standard monomial vector of degree $\leq d$ in
$\RR[x]$ and $H$ is a dd (resp. sdd) matrix. 
We denote the set of polynomials in $\RR[x]_{2d}$ that are dsos (resp.
sdsos) by $DSOS_{m,2d}$ (resp. $SDSOS_{m,2d}$).  
It is clear that $DSOS_{m,2d}\subseteq SDSOS_{m,2d}\subseteq
\Sigma_d^2[x]$. In general, all these containment relationships are
strict. Notice that optimization over $DSOS_{m,2d}$ (resp.
$SDSOS_{m,2d}$) can be done with a linear program (resp. second-order cone
program) of size polynomial in $m$ (see \cite[Theorem 3.9]{AM2019}). 

Now we replace the sos structure in the quadratic module
$\bm_{\mathbf{d}}(Q)$ by dsos and sdsos structures, respectively, and
define the following cones
\[
	\bm_{\mathbf{d}}^{\dsos}(Q):=\left\{\sum_{j=0}^2\sigma_jq_j\ \Big|\ q_0=1,\
	\sigma_j \text{ is dsos}, \, \deg(\sigma_jq_j)\le 2\mathbf{d},\
j=0, 1, 2\right\},
\]
and
\[
	\bm_{\mathbf{d}}^{\sdsos}(Q):=\left\{\sum_{j=0}^2\sigma_jq_j\ \Big|\ q_0=1,\
	\sigma_j \text{ is sdsos}, \, \deg(\sigma_jq_j)\le 2\mathbf{d},\
j=0, 1, 2\right\}.
\]
Clearly, it holds that
\[
	\bm_{\mathbf{d}}^{\dsos}(Q)\subseteq\bm_{\mathbf{d}}^{\sdsos}(Q)\subseteq\bm_{\mathbf{d}}(Q).
\]
Replacing $\bm_{\mathbf{d}}(Q)$ in
\eqref{eq::f*rlower} by $\bm_{\mathbf{d}}^{\dsos}(Q)$ and
$\bm_{\mathbf{d}}^{\sdsos}(Q)$, respectively, we obtain
\begin{equation}\label{eq::dsos}
\qquad	\left\{
	\begin{aligned}
		r_k^{\dsos}:=\sup_{\rho,\mH,\eta}\
		&\rho&\\
		\text{s.t.}\
		&f(\bX)-\rho
		g(x)+\mH(p(\bX,\by))+\sum_{j=1}^s\eta_j\varphi_j(x)\in
		\bm_{\mathbf{d}}^{\dsos}(Q),
		&\\
		&\rho\in\RR,\ \mH\in\left(\ckd{k}\right)^*,\ \eta\in\RR_+^s,&
	\end{aligned}\right.\tag{$\mathrm{P}^{\dsos}_k$}
\end{equation}
and 
\begin{equation}\label{eq::sdsos}
\qquad	\left\{
	\begin{aligned}
		r_k^{\sdsos}:=\sup_{\rho,\mH,\eta}\
		&\rho&\\
		\text{s.t.}\
		&f(\bX)-\rho
		g(x)+\mH(p(\bX,\by))+\sum_{j=1}^s\eta_j\varphi_j(x)\in
		\bm_{\mathbf{d}}^{\sdsos}(Q),
		&\\
		&\rho\in\RR,\ \mH\in\left(\ckd{k}\right)^*,\ \eta\in\RR_+^s.&
	\end{aligned}\right.\tag{$\mathrm{P}^{\sdsos}_k$}
\end{equation}
It is obvious that $r_k^{\dsos}\le r_k^{\sdsos}\le r_k^{\psdp}\le
r^{\star}$ for each $k\in\N$, even in absence of the convexity
assumption in {\bf A1-2} (see Remark \ref{rk::lower}). It is
remarkable that the semidefinite
constraints brought by $\bm_{\mathbf{d}}(Q)$ in
\eqref{eq::f*rlower} are replaced by a set of linear inequality
constraints (resp. second-order cone constraints) in \eqref{eq::dsos}
(resp. \eqref{eq::sdsos}). Although the convergence of
$\{r_k^{\dsos}\}_{k\in\N}$ (resp. $\{r_k^{\sdsos}\}_{k\in\N}$) to
$r^{\star}$ is not guaranteed, the computation time for solving
\eqref{eq::dsos} (resp. \eqref{eq::sdsos}) could be considerably less
than that of \eqref{eq::f*rlower}. Consequently, for \eqref{FMP}
problems with large $m$ and $\mathbf{d}$ that are significantly
beyond the capability of the SDP relaxation \eqref{eq::f*rlower}, we
can still expect to obtain meaningful lower bounds of $r^{\star}$ in a
reasonable time by solving the alternatives \eqref{eq::dsos} or
\eqref{eq::sdsos} (see Example \ref{ex::compare2}).

\section{Convergence rate analysis}\label{sec::rate}

In this section, we consider the convergence rate of the lower bound
$r^{\dsdp}_k$ to the optimal value $r^{\star}$ and the outer
approximation $\Lambda_k$ to the feasible set $\K$. This will be done
by combining the convergence analysis of Lasserre's measure-based
upper bounds for polynomial minimization problems in \cite{SL2020} and
the metric regularity of
semi-infinite convex inequality system (c.f. \cite{CKLP2007}).
In this
section, to apply the results in \cite{SL2020}, {\itshape we assume
that the measure $\nu$ \eqref{eq::outerapp} is the Lebesgue
measure with support exactly $\Y$}.
%

Define the set-valued mapping $\mathcal{G}:\ \RR^m \
\rightrightarrows\ \RR^2$ by 
\[
		\mathcal{G}(x):=\{(\eta, R)\in\RR^2 \mid \Vert x\Vert\le
			R,\ p(x,y)\le \eta,\ \forall \ y\in\Y\}. 
\]
Let $\mathbf{F}:=\{x\in\RR^m \mid \varphi_i(x)\le 0, \ i=1,\ldots, s\}$. 
Then, it is clear that
$\K\cap\mathbf{B}^m_{\mathfrak{R}}=\mathcal{G}^{-1}(0,
\mathfrak{R})\cap\mathbf{F}$.

\begin{proposition}\cite[Lemma 3]{CKLP2007}\label{prop::metric}
The following statements are equivalent:
\begin{enumerate}[\upshape (i)]
	\item there exists $\bar{x}\in\RR^m$ such that $\Vert
		\bar{x}\Vert<\bar{R}$ and $p(\bar{x},y)<\bar{\eta}$ for all $y\in\Y$;
	\item $\mathcal{G}$ is metrically regular at any
		$u\in\mathcal{G}^{-1}(\bar{\eta}, \bar{R})$ for $(\bar{\eta},
		\bar{R})$, i.e., there exist $d_1,
		d_2>0$ and $c\ge 0$ such that whenever $\Vert x-u\Vert<d_1$
		and $\Vert(\eta, R)-(\bar{\eta}, \bar{R})\Vert<d_2$, it holds
		that
		\[
			\dist(x,\ \mathcal{G}^{-1}(\eta, R))\le c\cdot\dist((\eta,
			R),\ \mathcal{G}(x)). 
		\]
\end{enumerate}
\end{proposition}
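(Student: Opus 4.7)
The plan is to recast $\mathcal{G}$ as a convex set-valued mapping and then invoke the Robinson--Ursescu theorem. Define $P(x):=\sup_{y\in\Y} p(x,y)$, which is convex in $x$ (a supremum of a family of convex functions under \textbf{A1}(ii)) and continuous on $\RR^m$ (since $\Y$ is compact and $p$ is jointly continuous, this is a standard maximum-theorem argument). Then $(\eta,R)\in\mathcal{G}(x)$ iff $\eta\ge P(x)$ and $R\ge\|x\|$, so $\mathcal{G}$ is the epigraphical multifunction associated with the vector-valued convex continuous map $x\mapsto(P(x),\|x\|)$. In particular, the graph of $\mathcal{G}$ is a closed convex subset of $\RR^m\times\RR^2$, placing us squarely in the setting where the Robinson--Ursescu theorem applies.

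The direction (ii)$\Rightarrow$(i) is the easy one: pick any $u\in\mathcal{G}^{-1}(\bar\eta,\bar R)$ at which metric regularity holds, and apply the inequality with $x=u$. Since $(\bar\eta,\bar R)\in\mathcal{G}(u)$, the right-hand side $c\cdot\dist((\eta,R),\mathcal{G}(u))$ is finite for all $(\eta,R)$ in the $d_2$-neighborhood of $(\bar\eta,\bar R)$, so $\mathcal{G}^{-1}(\eta,R)$ is nonempty on that neighborhood. Choosing $(\eta,R)=(\bar\eta-\delta,\bar R-\delta)$ for some sufficiently small $\delta>0$ and any $\bar x\in\mathcal{G}^{-1}(\eta,R)$, we obtain $\|\bar x\|\le\bar R-\delta<\bar R$ and $p(\bar x,y)\le\bar\eta-\delta<\bar\eta$ for every $y\in\Y$, which is precisely (i).

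The main work lies in (i)$\Rightarrow$(ii), and the central tool is the Robinson--Ursescu theorem: for a convex multifunction with closed graph between Banach spaces, metric regularity at $(u,(\bar\eta,\bar R))\in\mathrm{gph}(\mathcal{G})$ is equivalent to $(\bar\eta,\bar R)$ lying in the topological interior of $\mathrm{range}(\mathcal{G})$. Under (i), the strict inequalities $P(\bar x)<\bar\eta$ and $\|\bar x\|<\bar R$ open up a neighborhood of $(\bar\eta,\bar R)$ consisting of pairs $(\eta,R)$ that still satisfy $\eta>P(\bar x)$ and $R>\|\bar x\|$; each such pair lies in $\mathcal{G}(\bar x)\subseteq\mathrm{range}(\mathcal{G})$, so $(\bar\eta,\bar R)\in\mathrm{int}(\mathrm{range}(\mathcal{G}))$. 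Robinson--Ursescu then delivers the metric regularity, with constants $c,d_1,d_2$, at every $u\in\mathcal{G}^{-1}(\bar\eta,\bar R)$.

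The principal obstacle to make fully rigorous is verifying the hypotheses of the Robinson--Ursescu theorem in precisely this form: one must confirm closedness and convexity of $\mathrm{gph}(\mathcal{G})$ (handled by the continuity and convexity of $P$ and $\|\cdot\|$ noted above) and invoke the exact quantitative version of the theorem that produces the linear rate $c$ together with the two radii $d_1,d_2$. Once that is in place, the remaining steps are essentially bookkeeping around the epigraphical reformulation.
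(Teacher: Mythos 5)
The paper does not actually prove this proposition: it is imported verbatim as Lemma 3 of \cite{CKLP2007} and used as a black box, so there is no internal argument to compare yours against line by line. Your proof via the Robinson--Ursescu theorem is correct and is essentially the standard route to such Slater-type characterizations of metric regularity for convex systems (and is close in spirit to what the cited reference does). The epigraphical reformulation is sound: under \textbf{A1} the index set $\Y$ is compact, so $P(x)=\sup_{y\in\Y}p(x,y)$ is finite, continuous and convex, the graph $\{(x,\eta,R)\mid P(x)\le\eta,\ \Vert x\Vert\le R\}$ is closed and convex, and the strict inequalities in (i) place $(\bar\eta,\bar R)$ in the interior of $\mathcal{G}(\bar x)$, hence of $\mathrm{range}(\mathcal{G})$, which is exactly the Robinson--Ursescu criterion; since that criterion does not depend on the base point, regularity follows at every $u\in\mathcal{G}^{-1}(\bar\eta,\bar R)$. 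The converse, reading $\dist(u,\emptyset)=+\infty$ to force nonemptiness of $\mathcal{G}^{-1}(\bar\eta-\delta,\bar R-\delta)$ and extracting a strict Slater point, is also fine. Two small points worth making explicit: (a) the equivalence as literally stated degenerates when $\mathcal{G}^{-1}(\bar\eta,\bar R)=\emptyset$ (then (ii) is vacuously true while (i) can fail), and your (ii)$\Rightarrow$(i) argument tacitly assumes such a $u$ exists --- a defect inherited from the statement itself, and immaterial for the paper, which only uses (i)$\Rightarrow$(ii) under \textbf{A4}; (b) the constants $c,d_1,d_2$ delivered by Robinson--Ursescu depend on the base point $u$, which is consistent with the phrasing of the proposition and with the paper's later use of it, where uniform constants over the compact set $\mathcal{G}^{-1}(0,\mathfrak{R})$ are recovered by a finite-subcover argument in Corollary \ref{cor::dist}.
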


Consider the assumption that 

\noindent {\bf A4:}\ There exists $\bar{x}\in\RR^m$ such that $\Vert
	\bar{x}\Vert<\mathfrak{R}$ and $p(\bar{x},y)<0$ for all $y\in\Y$.

\begin{corollary}\label{cor::dist}
	Under {\bf A4}, there exist $d>0$ and $c\ge 0$ such that
whenever $\Vert x\Vert\le \mathfrak{R}$ and $\dist(x,\
\mathcal{G}^{-1}(0, \mathfrak{R}))<d$, it holds that
\[
	\dist(x,\ \mathcal{G}^{-1}(0, \mathfrak{R}))\le
	c\cdot\max\left\{\max_{y\in\Y} p(x,y),\ 0\right\}.
\]
\end{corollary}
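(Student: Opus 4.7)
The plan is to derive the estimate by invoking the metric regularity supplied by Proposition \ref{prop::metric} at the distinguished target value $(\bar\eta,\bar R)=(0,\mathfrak{R})$. Assumption \textbf{A4} is precisely condition (i) of Proposition \ref{prop::metric} in this instance, so condition (ii) grants, at every $u\in\mathcal{G}^{-1}(0,\mathfrak{R})$, constants $d_1(u),d_2(u)>0$ and $c(u)\ge 0$ controlling $\dist(x,\mathcal{G}^{-1}(\eta,R))$ by $c(u)\,\dist((\eta,R),\mathcal{G}(x))$ whenever $(x,\eta,R)$ is close enough to $(u,0,\mathfrak{R})$. The task is then to turn this pointwise statement into a uniform one, and to relate $\dist((0,\mathfrak{R}),\mathcal{G}(x))$ to $\max\{\max_{y\in\Y} p(x,y),0\}$.

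First I would observe that $\mathcal{G}^{-1}(0,\mathfrak{R})\subseteq\mathbf{B}^m_{\mathfrak{R}}$, and that this set is closed (it is the intersection of the ball with the sublevel sets of the continuous functions $x\mapsto p(x,y)$), hence compact. I would cover it by the open balls $\mathbf{B}^m_{d_1(u)/2}(u)$, $u\in\mathcal{G}^{-1}(0,\mathfrak{R})$, extract a finite subcover centered at $u_1,\ldots,u_N$, and set $d:=\min_{i}d_1(u_i)/2>0$ and $c:=\max_{i}c(u_i)\ge 0$.

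Next, for any $x$ with $\Vert x\Vert\le\mathfrak{R}$ and $\dist(x,\mathcal{G}^{-1}(0,\mathfrak{R}))<d$, one selects a near-projection $u\in\mathcal{G}^{-1}(0,\mathfrak{R})$ with $\Vert x-u\Vert<d$; since $u$ lies in some $\mathbf{B}^m_{d_1(u_i)/2}(u_i)$, the triangle inequality gives $\Vert x-u_i\Vert<d_1(u_i)$, while trivially $\Vert(0,\mathfrak{R})-(0,\mathfrak{R})\Vert=0<d_2(u_i)$. The local metric regularity at $u_i$ then yields
\[
\dist(x,\mathcal{G}^{-1}(0,\mathfrak{R}))\le c(u_i)\,\dist((0,\mathfrak{R}),\mathcal{G}(x))\le c\,\dist((0,\mathfrak{R}),\mathcal{G}(x)).
\]
To convert the right-hand side to the advertised form, I would use that whenever $\Vert x\Vert\le\mathfrak{R}$ the pair $\bigl(\max\{\max_{y\in\Y}p(x,y),0\},\,\mathfrak{R}\bigr)$ lies in $\mathcal{G}(x)$, whose Euclidean distance from $(0,\mathfrak{R})$ equals $\max\{\max_{y\in\Y}p(x,y),0\}$, giving the claimed bound.

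The main obstacle is the uniformization step, since Proposition \ref{prop::metric} provides only pointwise metric regularity. The compactness of $\mathcal{G}^{-1}(0,\mathfrak{R})$ together with a careful finite-cover argument is needed to collapse the family $\{(d_1(u),c(u))\}_u$ into a single pair $(d,c)$; one must in particular choose $d$ small enough that any $x$ at distance less than $d$ from $\mathcal{G}^{-1}(0,\mathfrak{R})$ is automatically within $d_1(u_i)$ of some cover center $u_i$, which is exactly why the factor $1/2$ in the cover radii and the $\min$/$\max$ choices above are made.
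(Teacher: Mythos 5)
Your proposal is correct and follows essentially the same route as the paper: both invoke Proposition \ref{prop::metric} at $(\bar\eta,\bar R)=(0,\mathfrak{R})$ under \textbf{A4}, uniformize the pointwise constants via compactness of $\mathcal{G}^{-1}(0,\mathfrak{R})$ and a finite subcover, and bound $\dist((0,\mathfrak{R}),\mathcal{G}(x))$ by $\max\{\max_{y\in\Y}p(x,y),0\}$ using that $\Vert x\Vert\le\mathfrak{R}$ places a suitable point of $\mathcal{G}(x)$ at exactly that distance. The only (immaterial) difference is how the uniform $d$ is produced: you use half-radius balls and a triangle inequality, whereas the paper covers with full-radius balls and shows by a compactness-contradiction argument that a $d$-neighborhood of $\mathcal{G}^{-1}(0,\mathfrak{R})$ lies inside the union.
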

\begin{proof}
Note that for any $x\in\RR^m$ with $\Vert x\Vert\le \mathfrak{R}$,
$(\max_{y\in\Y} p(x,y), \mathfrak{R})\in\mathcal{G}(x)$. Then, 
for any $u\in\mathcal{G}^{-1}(0, \mathfrak{R})$, by {\bf A4} and Proposition
\ref{prop::metric}, there exist $d_u>0$ and $c_u\ge 0$ such that
whenever $\Vert x-u\Vert<d_u$ and $\Vert x\Vert\le \mathfrak{R}$, it
holds that 
\begin{equation}\label{eq::dist}
	\begin{aligned}
		\dist(x,\ \mathcal{G}^{-1}(0, \mathfrak{R}))&\le
		c_u\cdot\dist((0, \mathfrak{R}),\ \mathcal{G}(x))\\
		&\le c_u\cdot\max\left\{\max_{y\in\Y} p(x,y),\ 0\right\}. 
	\end{aligned}
\end{equation}
As $\mathcal{G}^{-1}(0, \mathfrak{R})$ is compact, we can find
finitely many points $u^{(i)}\in\mathcal{G}^{-1}(0, \mathfrak{R})$
and corresponding $d_{u^{(i)}}>0,\ c_{u^{(i)}}\ge 0$, $i=1,\ldots,t$, satisfying
\eqref{eq::dist} and 
\[
	\mathcal{G}^{-1}(0, \mathfrak{R})\subset\cup_{i=1}^t\{x\in\RR^m\mid \Vert
		x-u^{(i)}\Vert<d_{u^{(i)}}\}=:\mathcal{O}. 
\]
Moreover, there exists $d>0$ such that 
\[
	\{x\in\RR^m \mid \dist(x,\
		\mathcal{G}^{-1}(0, \mathfrak{R}))<d\}\subset\mathcal{O}. 
\]
Otherwise, there exists a sequence $\{x^{(k)}\}_{k\in\N}$ such that
$\dist(x^{(k)},\ \mathcal{G}^{-1}(0, \mathfrak{R}))<\frac{1}{k}$ and
$x^{(k)}\not\in\mathcal{O}$ for each $k\in\N$. As
$\mathcal{G}^{-1}(0, \mathfrak{R})$ is compact, we can assume that
there is a point $x'\in\RR^m$ such that $\lim_{k\rightarrow\infty}
x^{(k)}=x'$. Then, $\dist(x',\ \mathcal{G}^{-1}(0, \mathfrak{R}))=0$
and hence $x'\in\mathcal{G}^{-1}(0, \mathfrak{R})$. However, as
$\mathcal{O}$ is open, we have $x'\not\in\mathcal{O}$, a
contradiction. Then, the conclusion holds for this $d>0$ and
$c=\max_{1\le i\le t}c_{u^{(i)}}$. 
\end{proof}

For any $\mL_k\in(\RR[x]_{2\mathbf{d}})^*$, $k\in\N$, satisfying
the conditions in \eqref{eq::outerappro}, we define a number
$E(\mL_k):=\td{p}^{\star}_k-p^{\star}_k$, where 
\begin{equation}\label{eq::p*}
	\begin{aligned}
		p^{\star}_k:&=\min_{y\in\Y} -\mL_k(p(x,y))\\
		&=\min_{\mu\in\mathcal{M}(Y)} \int_{\Y}
		-\mL_k(p(x,y))\ud\mu(y)\quad \text{s.t. }\int_{\Y}\ud\mu(y)=1,
	\end{aligned}
\end{equation}
and 
\begin{equation}\label{eq::pp*}
	\left\{
	\begin{aligned}
		\td{p}^{\star}_k:= \min_{\sigma}&\ \int_{\Y}
		-\mL_k(p(x,y))\sigma(y)\ud y\\
		\text{s.t.}&\ \sigma(y)\in\Sigma^2_k[y],\ \int_{\Y}
		\sigma(y)\ud y=1. 
	\end{aligned}\right.
\end{equation}
In fact, \eqref{eq::pp*} is the $k$-th Lasserre's measure-based
relaxation (see \cite{LasserreOuter}) of \eqref{eq::p*}, where the
probability measures are replaced by the one having a density
$\sigma\in\Sigma^2_k[y]$ with respect to the Lebesgue measure.
Thus, $\td{p}^{\star}_k$ is an upper bound of $p^{\star}_k$ and
$E(\mL_k)\ge 0$. By the definition of
$\mathcal{P}^k(\Y)$, we have $\td{p}^{\star}_k\ge 0$. Hence, it holds
that
\[
	\max_{y\in\Y} \mL_k(p(x,y)) = -p_k^{\star} = E(\mL_k) -
	\td{p}^{\star}_k \le E(\mL_k).
\]
Clearly, $\mL_k^{\star}/\mL_k^{\star}(1)\in(\RR[x]_{2\mathbf{d}})^*$
satisfies the conditions in \eqref{eq::outerappro} for any minmizer
$\mL_k^{\star}$ of \eqref{eq::f*rduallower}.

\begin{theorem}\label{th::conrate}
	Under {\bf A1} and {\bf A4}, there exist $k'\in\N$ and $c\ge
	0$ such that whenever $k\ge k'$, 
\begin{equation}\label{eq::r1}
	\dist(\mL_k(x),\ \K\cap\mathbf{B}^m_{\mathfrak{R}})\le c\cdot E(\mL_k),
\end{equation}
for any $\mL_k$ satisfying the conditions in \eqref{eq::outerappro}.
Furthermore, under {\bf A1-4}, there exists $\td{c}\ge 0$ such
that whenever $k\ge k'$,
\[
	0\le r^{\star}-r^{\dsdp}_k\le \td{c}\cdot E(\mL_k^{\star}/\mL_k^{\star}(1)),
\]
where $\mL_k^{\star}$ is any minimizer of \eqref{eq::f*rduallower}. 
\end{theorem}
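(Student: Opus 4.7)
The plan is to combine Jensen's inequality for s.o.s-convex polynomials (Proposition \ref{prop::Jensen}) with the semi-infinite metric regularity in Corollary \ref{cor::dist}. For the first bound, fix any $\mL_k$ satisfying \eqref{eq::outerappro}, so that $\mL_k(1)=1$ and $\mL_k(\sigma)\ge 0$ for $\sigma\in\Sigma^2_{\mathbf{d}}[x]$. Three applications of Jensen transfer the feasibility information to the point $\mL_k(x)\in\RR^m$: (a) $\Vert\mL_k(x)\Vert^2\le\mL_k(\Vert x\Vert^2)\le\mathfrak{R}^2$; (b) $\varphi_j(\mL_k(x))\le\mL_k(\varphi_j)\le 0$, so $\mL_k(x)\in\mathbf{F}$; (c) $p(\mL_k(x),y)\le\mL_k(p(x,y))$ for each fixed $y\in\Y$, whence $\max_{y\in\Y} p(\mL_k(x),y)\le\max_{y\in\Y}\mL_k(p(x,y)) = -p^{\star}_k\le E(\mL_k)$ by the definition of $E(\mL_k)$.

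Next, by Corollary \ref{cor::outersdr} applied with $\varepsilon=d$ (the constant in Corollary \ref{cor::dist}), there is $k'\in\N$ such that $\dist(\mL_k(x),\K\cap\mathbf{B}^m_{\mathfrak{R}})<d$ whenever $k\ge k'$, and hence $\dist(\mL_k(x),\mathcal{G}^{-1}(0,\mathfrak{R}))<d$. Then Corollary \ref{cor::dist} yields $\dist(\mL_k(x),\mathcal{G}^{-1}(0,\mathfrak{R}))\le c\cdot E(\mL_k)$. To upgrade this to the required bound on $\dist(\mL_k(x),\K\cap\mathbf{B}^m_{\mathfrak{R}})=\dist(\mL_k(x),\mathcal{G}^{-1}(0,\mathfrak{R})\cap\mathbf{F})$, I would rerun the arguments of Proposition \ref{prop::metric} and Corollary \ref{cor::dist} on the augmented set-valued map $\td{\mathcal{G}}:\RR^m\rightrightarrows\RR^{2+s}$ that additionally records slacks $\theta_j\ge\varphi_j(x)$; since $\mL_k(x)\in\mathbf{F}$ by (b), the extra slack coordinates are already satisfied and no Slater hypothesis on the $\varphi_j$'s beyond A4 is needed.

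For the second bound, set $\hat{\mL}_k:=\mL_k^{\star}/\mL_k^{\star}(1)$; the proof of Theorem \ref{th::lower}(iii) shows $\mL_k^{\star}(1)>0$, and \eqref{eq::outerappro} is positively homogeneous, so $\hat{\mL}_k$ satisfies \eqref{eq::outerappro} and the first bound supplies $u_k\in\K\cap\mathbf{B}^m_{\mathfrak{R}}$ with $\Vert u_k-\hat{\mL}_k(x)\Vert\le c\cdot E(\hat{\mL}_k)$. Since $\mL_k^{\star}(g)=1$, we have $r^{\dsdp}_k=\mL_k^{\star}(f)=\hat{\mL}_k(f)/\hat{\mL}_k(g)$ and $\hat{\mL}_k(g)=1/\mL_k^{\star}(1)\ge g^{\star}$ by Proposition \ref{prop::bound}. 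Under A2, Jensen gives $f(\hat{\mL}_k(x))\le\hat{\mL}_k(f)$ and $g(\hat{\mL}_k(x))\ge\hat{\mL}_k(g)\ge g^{\star}$; a short case-split on the sign of $f(\hat{\mL}_k(x))$ (using $r^{\dsdp}_k\to r^{\star}>0$ in the first case of A2-(ii), and the equality $g(\hat{\mL}_k(x))=\hat{\mL}_k(g)$ when $g$ is affine) gives $f(\hat{\mL}_k(x))/g(\hat{\mL}_k(x))\le r^{\dsdp}_k$ for $k$ large. Finally, $f/g$ is Lipschitz with some constant $L$ on the compact set $\mathbf{B}^m_{\mathfrak{R}}\cap\{g\ge g^{\star}/2\}$, which contains both $u_k$ and $\hat{\mL}_k(x)$ for $k$ large, so
\begin{equation*}
r^{\star}\le\frac{f(u_k)}{g(u_k)}\le\frac{f(\hat{\mL}_k(x))}{g(\hat{\mL}_k(x))}+L\Vert u_k-\hat{\mL}_k(x)\Vert\le r^{\dsdp}_k+Lc\cdot E(\hat{\mL}_k),
\end{equation*}
and $\td{c}:=Lc$ closes the argument; the nonnegativity $r^{\star}-r^{\dsdp}_k\ge 0$ is Theorem \ref{th::lower}(i).

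The main obstacle I anticipate is the upgrade from $\mathcal{G}^{-1}(0,\mathfrak{R})$ to $\K\cap\mathbf{B}^m_{\mathfrak{R}}$ in the first bound: a naive projection of $\mL_k(x)$ onto $\mathcal{G}^{-1}(0,\mathfrak{R})$ need not land in $\mathbf{F}$, so either the augmented $\td{\mathcal{G}}$ argument must be executed carefully, or a separate Hoffman-type error bound for the intersection with the s.o.s-convex set $\mathbf{F}$ must be invoked. A minor subtlety is the uniformity in $k$ of the Lipschitz constant of $f/g$, which is immediate once we localize to the region where $g\ge g^{\star}/2$.
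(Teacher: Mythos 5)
Your proposal follows the paper's own route step for step: Jensen's inequality (Proposition \ref{prop::Jensen}) transfers the constraints in \eqref{eq::outerappro} to the point $\mL_k(x)$, Corollary \ref{cor::outersdr} with $\varepsilon=d$ localizes $\mL_k(x)$ within distance $d$ of $\K\cap\mathbf{B}^m_{\mathfrak{R}}$ for $k\ge k'$, Corollary \ref{cor::dist} converts the violation $\max_{y\in\Y}p(\mL_k(x),y)\le E(\mL_k)$ into a distance bound, and Lipschitz continuity of $f/g$ yields the bound on $r^{\star}-r^{\dsdp}_k$. Two of your refinements are genuine improvements over the paper's write-up: the case split on the sign of $f(\hat{\mL}_k(x))$ is actually needed (Jensen gives the numerator and denominator inequalities in opposite directions, so $f(\hat{\mL}_k(x))/g(\hat{\mL}_k(x))\le \hat{\mL}_k(f)/\hat{\mL}_k(g)$ does not follow from s.o.s-convexity alone), and the Lipschitz constant must indeed be taken on a neighborhood containing $\hat{\mL}_k(x)$, which need not lie in $\K\cap\mathbf{B}^m_{\mathfrak{R}}$. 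The obstacle you flag at the end is real, and it is precisely where the paper's own proof is thinnest: the paper asserts $\dist(\mL_k(x),\,\mathcal{G}^{-1}(0,\mathfrak{R}))=\dist(\mL_k(x),\,\K\cap\mathbf{B}^m_{\mathfrak{R}})$ on the sole grounds that $\mL_k(x)\in\mathbf{F}$, but membership of the reference point in $\mathbf{F}$ does not force its nearest point in $\mathcal{G}^{-1}(0,\mathfrak{R})$ to lie in $\mathbf{F}$, so the asserted equality (equivalently, the inequality $\dist(x,A\cap\mathbf{F})\le\dist(x,A)$) is unjustified in general. Your augmented-map repair is the right idea, but be aware that rerunning Proposition \ref{prop::metric} for the system that also records slacks $\theta_j\ge\varphi_j(x)$ requires a Slater point for the \emph{whole} augmented system, i.e.\ a point $\bar{x}$ with $\Vert\bar{x}\Vert<\mathfrak{R}$, $p(\bar{x},y)<0$ for all $y\in\Y$ and $\varphi_j(\bar{x})<0$ for all $j$; that is {\bf A3} combined with {\bf A4}, not {\bf A4} alone, so your first bound would be established under {\bf A1}, {\bf A3}--{\bf A4} rather than under {\bf A1} and {\bf A4} as the theorem states. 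This costs nothing for the second assertion, which already assumes {\bf A1}--{\bf A4}.
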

\begin{proof}
	Let $d,\ c$ be the numbers in Corollary \ref{cor::dist}.
	By Theorem \ref{th::outersdr}, the nested compact sets
	$\{\Lambda_{k}\}_{k\in\N}$ converges to
	$\K\cap\mathbf{B}^m_{\mathfrak{R}}$ as $k\to\infty$ in the
Hausdorff sense.  So there exists $k'\in\N$ such that
	whenever $k\ge k'$, $\dist(\mL_k(x),\
	\K\cap\mathbf{B}^m_{\mathfrak{R}})<d$ for any $\mL_k$ satisfying
	the conditions in \eqref{eq::outerappro}.

For any $\mL_k$ satisfying the conditions in \eqref{eq::outerappro}, as
$\varphi_i(x)$, $p(x,y)$ is sos-convex in $x$ for all
$i=1,\ldots,s$, $y\in\Y$,
Proposition \ref{prop::Jensen} implies that
\[
	\begin{aligned}
		&\varphi(\mL_k(x))\le \mL_k(\varphi) \le 0,\quad
		i=1,\ldots,s,\\
		&p(\mL_k(x),y)\le \mL_k(p(x,y)) \le
		E(\mL_k),\quad \text{ for all } y\in\Y.  
	\end{aligned}
\]
Hence, $\mL_k(x)\in\mathbf{F}$ and $\dist(\mL_k(x),\ \mathcal{G}^{-1}(0,
\mathfrak{R}))=\dist(\mL_k(x),\ \K\cap\mathbf{B}^m_{\mathfrak{R}})<d$ whenever $k\ge k'$. 
Recall that $\Lambda_k\subseteq\mathbf{B}^m_{\mathfrak{R}}$ for each
$k\in\N$ by Theorem \ref{th::outersdr}. Then according to Corollary
\ref{cor::dist}, \eqref{eq::r1} holds for any $k\ge k'$ because
\[
	\begin{aligned}
\dist(\mL_k(x),\ \K\cap\mathbf{B}^m_{\mathfrak{R}})&=\dist(\mL_k(x),\
\mathcal{G}^{-1}(0, \mathfrak{R}))\\
&\le c\cdot\max\left\{\max_{y\in\Y} p(\mL_k(x),y),\ 0\right\},\\
&\le c\cdot E(\mL_k).
\end{aligned}
\]

For each $k\in\N$, by Theorem \ref{th::lower}, $r^{\dsdp}_k$ is
attainable at a linear functional
$\mL^{\star}_k\in(\RR[x]_{2\mathbf{d}})^*$ feasible to
\eqref{eq::f*rduallower}. Then, by the sos-convexity of $f$ and
$-g$, 
\begin{equation}\label{eq::ineq}
	\frac{f(\mL^{\star}_k(x)/\mL^{\star}_k(1))}{g(\mL^{\star}_k(x)/\mL^{\star}_k(1))}\le
	\frac{\mL^{\star}_k(f)/\mL^{\star}_k(1)}{\mL^{\star}_k(g)/\mL^{\star}_k(1)}=r^{\dsdp}_k\le
	r^{\star}\le \frac{f(x)}{g(x)},\quad\text{for all }
	x\in\K\cap\mathbf{B}^m_{\mathfrak{R}}. 
\end{equation}
As $\mL_k^{\star}/\mL_k^{\star}(1)$ satisfies the conditions in
\eqref{eq::outerappro}, 
by the Lipschitz continuity of $\frac{f}{g}$ on
$\K\cap\mathbf{B}^m_{\mathfrak{R}}$, \eqref{eq::ineq} and
\eqref{eq::r1}, there exists $c'>0$ such that 
\[
	\begin{aligned}
		0\le r^{\star}-r^{\dsdp}_k&\le 
		\Big|\frac{f(\mL^{\star}_k(x)/\mL^{\star}_k(1))}{g(\mL^{\star}_k(x)/\mL^{\star}_k(1))}-\frac{f(x)}{g(x)}\Big|
		&\text{for all}\  x\in\K\cap\mathbf{B}^m_{\mathfrak{R}},\\
		&\le c'\cdot \dist(\mL^{\star}_k(x)/\mL^{\star}_k(1),\
		\K\cap\mathbf{B}^m_{\mathfrak{R}})\\
		&\le c'\cdot c\cdot E(\mL_k^{\star}/\mL_k^{\star}(1)). 
	\end{aligned}
\]
Letting $\td{c}=c'\cdot c$, the conclusion follows. 
\end{proof}

For each $k\in\N$, provided a  uniform bound of $E(\mL_k)$ for all
$\mL_k\in(\RR[x]_{2\mathbf{d}})^*$ satisfying the conditions in
\eqref{eq::outerappro}, which
is in term of $k$ but independent on $\mL_k$, we can establish the
convergence rate of $r^{\dsdp}_k$ and $\Lambda_k$ by Theorem
\ref{th::conrate}. We show that such bounds can be derived from the
paper \cite{SL2020} which investigates the
convergence analysis of Lasserre's measure-based upper bounds for
polynomial minimization problems. 

Since $\Y$ is compact, Proposition \ref{prop::bound} implies that
there are uniform bounds $B_1, B_2>0$ such that
\begin{equation}\label{eq::b12}
	\max_{y\in\Y} \Vert \nabla
	(-\mL_k(p(x,y)))\Vert\le B_1\quad\text{and}\quad
	\max_{y\in\Y}
	\Vert\nabla^2(-\mL_k(p(x,y)))\Vert\le B_2, 
\end{equation}
for all $\mL_k\in(\RR[x]_{2\mathbf{d}})^*, k\in\N$, satisfying the
conditions in \eqref{eq::outerappro}. 
Remark that the convergence analysis given in \cite{SL2020} depends
on the maximum norm of the gradient and Hessian of the
objective polynomial on the feasible set, rather than the
objective polynomial itself. Therefore,
the existence of $B_1$ and $B_2$
enables us to obtain the desired bounds of $E(\mL_k)$ by applying the
results in \cite{SL2020}. 
%
Next we only consider the case when $\Y$ is a general compact
subset of $[-1, 1]^n$ and satisfies 
\vskip 5pt
	\noindent {\bf A5:}\ \cite{KLZ2017} There exist constants $\epsilon_{\Y},
	\eta_{\Y}>0$ such that
	\[
		\text{\rm vol}(\mathbf{B}^n_{\delta}(y)\cap\Y)\ge\eta_{\Y}
		\text{\rm vol}(\mathbf{B}^n_{\delta}(y))=\delta^n\eta_{\Y}
		\text{\rm vol}(\mathbf{B}^n)\quad\text{for all}\ y\in\Y\
		\text{\rm and}\ 0<\delta<\epsilon_{\Y}. 
	\]
\noindent This is a rather mild assumption and satisfied by, for instance,
convex bodies, sets that are star-shaped with respect to a ball.
In this case, the following Proposition \ref{prop::el} can be drived 
straightforwardly from \cite[Theorem 10]{SL2020}. For completeness,
the proof is included in Appendix \ref{appendix}, which is almost a
repetition of the arguments in \cite{SL2020}. Denote
$\mathbf{H}^n:=[-1, 1]^n$.  

\begin{proposition}\label{prop::el}
	Under {\bf A5}, there exists a $k'\in\N$ such
	that whenever $k\ge k'$, 
	\[
		E(\mL_k)\le 2\sqrt{n}B_1\left(\frac{(4n+2)\log k}{\lfloor k/2\rfloor}
		+\frac{C}{k}\right)=O\left(\frac{\log k}{k}\right),\ \text{where}\quad 
		C=\frac{2^{3n+3}\vol(\mathbf{H}^n)}{\eta_{\Y}
		n^{n/2}\vol(\mathbf{B}^n)}, 
	\] 
	for all $\mL_k\in(\RR[x]_{2\mathbf{d}})^*$ satisfying the
	conditions in \eqref{eq::outerappro}. 
\end{proposition}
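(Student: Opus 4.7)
The plan is to reduce Proposition \ref{prop::el} to a direct application of \cite[Theorem 10]{SL2020} applied to the family of polynomials $y \mapsto -\mL_k(p(x,y))$ parameterised by $\mL_k$, and then to exploit the uniform gradient bound in \eqref{eq::b12} to produce a single estimate valid for every admissible $\mL_k$.

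First, for each $\mL_k \in (\RR[x]_{2\mathbf{d}})^*$ satisfying the conditions in \eqref{eq::outerappro}, I would set $q_k(y) := -\mL_k(p(x,y)) \in \RR[y]_{d_y}$. By the definitions \eqref{eq::p*}--\eqref{eq::pp*}, the value $p_k^{\star}$ is exactly the true minimum of $q_k$ over $\Y$, while $\td{p}_k^{\star}$ is its $k$-th Lasserre measure-based upper bound in the sense of \cite{LasserreOuter}. Hence $E(\mL_k) = \td{p}_k^{\star}-p_k^{\star}$ is precisely the relaxation error whose asymptotics are analysed in \cite{SL2020}.

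Next, I would invoke \cite[Theorem 10]{SL2020}, which under the interior cone condition \textbf{A5} asserts that for every $q \in \RR[y]_{d_y}$ and every $k$ beyond a threshold $k'$ depending only on $n$, $\epsilon_{\Y}$ and $\eta_{\Y}$, the measure-based error is bounded by
\[
2\sqrt{n}\,\max_{y\in\Y}\|\nabla q(y)\|\left(\frac{(4n+2)\log k}{\lfloor k/2\rfloor}+\frac{C}{k}\right),\qquad C=\frac{2^{3n+3}\vol(\mathbf{H}^n)}{\eta_{\Y}\, n^{n/2}\vol(\mathbf{B}^n)}.
\]
Applying this estimate to $q = q_k$ and substituting the uniform bound $\max_{y\in\Y}\|\nabla q_k\|\le B_1$ from \eqref{eq::b12} yields the claimed bound on $E(\mL_k)$, since both $k'$ and $C$ are independent of $\mL_k$.

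The main --- and essentially only --- obstacle is to verify that the threshold $k'$ and the multiplicative constant in \cite[Theorem 10]{SL2020} depend on the input polynomial only through $\max_{y\in\Y}\|\nabla q\|$, so that the family $\{q_k\}$ can be treated uniformly in $\mL_k$. Inspecting the argument in \cite{SL2020}, which proceeds via a ball covering of $\Y$ afforded by \textbf{A5} combined with a pointwise Taylor-type expansion at the minimiser controlled by the gradient and Hessian norms of $q$, shows that this uniformity is automatic. The Hessian bound $B_2$ in \eqref{eq::b12} is what keeps the Taylor remainder under control across the entire family, even though $B_2$ does not appear explicitly in the final rate.
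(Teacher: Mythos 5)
Your proposal is correct and is essentially the paper's own route: the paper itself notes that Proposition \ref{prop::el} can be derived straightforwardly from \cite[Theorem 10]{SL2020}, and its appendix proof merely reproduces the needle-polynomial argument of \cite{SL2020} in order to make explicit that the threshold $k'$ and the constant $C$ depend on the input polynomial only through the uniform gradient bound $B_1$ and the geometry of $\Y$ (via $\epsilon_{\Y}$, $\eta_{\Y}$, $n$), exactly the uniformity check you describe. One minor correction: for this $O\left(\frac{\log k}{k}\right)$ rate the argument uses only the first-order estimate $-\mL_k(p(x,y))-p^{\star}_k\le B_1\Vert y-y^{\star}\Vert$ at the minimizer, so the Hessian bound $B_2$ plays no role here; it is only needed for the sharper rates mentioned in the remark following the corollary.
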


Theorem \ref{th::conrate} and Proposition \ref{prop::el} allow us to
state the following convergence rate of $\Lambda_k$ and $r_k^{\dsdp}$.
\begin{corollary}
	Under {\bf A1-5}, 
	as $k\rightarrow\infty$, 
\[
	\dist(u,\ \K\cap\mathbf{B}^m_{\mathfrak{R}})=O\left(\frac{\log
	k}{k}\right)\ \text{for all $u\in\Lambda_k$ and }
	0\le r^{\star}-r^{\dsdp}_k=O\left(\frac{\log k}{k}\right).
\]
\end{corollary}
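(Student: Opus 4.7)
The plan is to simply chain the two preceding results (Theorem \ref{th::conrate} and Proposition \ref{prop::el}) and observe that the bound on $E(\mL_k)$ supplied by Proposition \ref{prop::el} is uniform over all admissible $\mL_k$. There is essentially no new geometric input to supply; the corollary is a clean composition of the error estimates already in place.

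In more detail, I would first handle the outer approximation statement. Fix any $u\in\Lambda_k$. By the definition \eqref{eq::outerappro} of $\Lambda_k$, there is a linear functional $\mL_k\in(\RR[x]_{2\mathbf{d}})^*$ satisfying the conditions listed in \eqref{eq::outerappro} with $\mL_k(x)=u$. Choose $k'$ large enough that both Theorem \ref{th::conrate} and Proposition \ref{prop::el} apply simultaneously (take the maximum of the two thresholds). For $k\ge k'$, Theorem \ref{th::conrate} yields a constant $c\ge 0$, independent of the particular $\mL_k$, such that
\[
\dist(u,\ \K\cap\mathbf{B}^m_{\mathfrak{R}})\le c\cdot E(\mL_k).
\]
Proposition \ref{prop::el}, in turn, provides the uniform estimate $E(\mL_k)\le 2\sqrt{n}B_1\bigl((4n+2)\log k/\lfloor k/2\rfloor+C/k\bigr)=O(\log k/k)$, valid for every $\mL_k$ satisfying the conditions in \eqref{eq::outerappro}. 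Combining the two inequalities, $\dist(u,\K\cap\mathbf{B}^m_{\mathfrak{R}})=O(\log k/k)$ for every $u\in\Lambda_k$, as claimed.

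For the second assertion, I would apply the same two results to the particular linear functional $\mL_k^{\star}/\mL_k^{\star}(1)$, where $\mL_k^{\star}$ is a minimizer of \eqref{eq::f*rduallower} (which exists by Theorem \ref{th::lower}(i)). The normalization is exactly what is needed so that $\mL_k^{\star}/\mL_k^{\star}(1)$ satisfies the conditions in \eqref{eq::outerappro}, hence lies in the class to which Proposition \ref{prop::el} applies. Theorem \ref{th::conrate} then gives a constant $\td{c}\ge 0$ with
\[
0\le r^{\star}-r^{\dsdp}_k\le \td{c}\cdot E\bigl(\mL_k^{\star}/\mL_k^{\star}(1)\bigr),
\]
and Proposition \ref{prop::el} bounds the right-hand side by $O(\log k/k)$.

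There is no real obstacle here, since both intermediate results were carefully set up to be uniform in the choice of feasible $\mL_k$: Proposition \ref{prop::bound} guarantees uniform moment bounds, which in turn give the uniform bounds $B_1, B_2$ on gradients and Hessians in \eqref{eq::b12}, and these are exactly what Proposition \ref{prop::el} feeds upon. The only point worth being mildly careful about is to pick a single $k'$ that works for both invocations and to note that the constant $\td{c}$ inherits uniformity from $c$ and the Lipschitz constant of $f/g$ on $\K\cap\mathbf{B}^m_{\mathfrak{R}}$; neither depends on $k$.
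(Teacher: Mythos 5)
Your proposal is correct and matches the paper's intended argument: the corollary is stated as an immediate consequence of Theorem \ref{th::conrate} and Proposition \ref{prop::el}, obtained exactly by chaining the distance (resp. optimality-gap) bound in terms of $E(\mL_k)$ with the uniform $O(\log k/k)$ estimate on $E(\mL_k)$, taking the larger of the two thresholds $k'$. Your added remarks on the uniformity coming from Proposition \ref{prop::bound} and the bounds $B_1, B_2$ are consistent with the paper's setup.
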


\begin{remark}{\rm
		Moreover, thanks to the uniform bounds $B_1$ and $B_2$ in
		\eqref{eq::b12}, we can sharpen the above convergence
		rate in some special cases of $\Y$ using the results in
		\cite{SL2020}. For instance, the rate $O\left(\frac{\log
		k}{k}\right)$ can be improved to $O\left(\frac{\log^2
		k}{k^2}\right)$ when $\Y$ is a convex body and to
		$O\left(\frac{1}{k^2}\right)$ when $\Y$ is a simplex or
		ball-like convex body. For simplicity, the details
		are left to the interested readers.\qed
	}
\end{remark}

\section{Numerical experiments}\label{sec::num}
In this section, we present some numerical experiments to illustrate
the behavior of our SDP relaxation method for computing lower bounds
of $r^{\star}$ in \eqref{FMP}. 
All numerical experiments in the sequel were carried out on a PC with  
4-Core Intel i5 2GHz CPUs and 16G RAM. A rudimentary
Matlab code of our relaxation method and the experiment data can be
downloaded at \url{https://github.com/FengGuo2022/FSIPPsolve}.

In practice, to implement the SDP relaxations \eqref{eq::f*rlower} and
\eqref{eq::f*rduallower}, we need compute effectively the integrals
$\int_{\Y}y^{\beta}\ud\nu(y)$, $\beta\in\N^n$ to get the linear matrix
inequality representation of $\ckd{k}$ as mentioned in Section
\ref{sec::pre}. Here we list four cases of $\Y$ for which these
integrals can be obtained either explicitly in closed form or
numerically:
\begin{itemize}
	\item For $\Y=[-1, 1]^n$, we fix $\nu$ to be the Lebesgue measure on
		$\Y$. It is clear that
		\[
			\int_{\Y}y^{\beta}\ud\nu(y)=\left\{
				\begin{array}{ll}
					0& \text{if some } \beta_j\ \text{is odd},\\
\prod_{j=1}^n\frac{2}{\beta_j+1} & \text{if all } \beta_j\ \text{are
even.}
				\end{array}
				\right.
		\]
	\item For $\Y=\mathbb{S}_1:=\{y\in\RR^n\mid \Vert y\Vert=1\}$, we
		fix $\nu$ to be the $(n-1)$-dimensional surface measure. It was shown in
		\cite{Folland2001} that
		\[
				\int_{\Y}y^{\beta}\ud\nu(y)=\left\{
				\begin{array}{ll}
					0& \text{if some } \beta_j\ \text{is odd},\\
					\frac{2\Gamma(\hat{\beta}_1)\Gamma(\hat{\beta}_2)\cdots\Gamma(\hat{\beta}_n)}{\Gamma(\hat{\beta}_1+\hat{\beta}_2+\cdots+\hat{\beta}_n)} 
					& \text{if all } \beta_j\ \text{are even,}
				\end{array}
				\right.
		\]
		where $\Gamma(\cdot)$ is the gamma function and
		$\hat{\beta}_j=\frac{1}{2}(\beta_j+1)$, $j=1, \ldots , n.$
	\item For $\Y=\mathbf{B}^n=\{y\in\RR^n\mid \Vert y\Vert\le 1\}$,
		we fix $\nu$ to be the Lebesgue measure on $\Y$. It was shown
			in \cite{Folland2001} that
		\[
			\int_{\Y}y^{\beta}\ud\nu(y)=\frac{1}{\beta_1+\cdots+\beta_n+n}\int_{\mathbb{S}_1}y^{\beta}\ud\nu(y). 
		\]
	\item For a polytope $\Y\subset[-1, 1]^n$, we fix $\nu$ to be the
		Lebesgue measure on $\Y$. To get the integrals
		$\int_{\Y}y^{\beta}\ud\nu(y)$, we can use the software
		\texttt{LattE integrale} \cite{LattE} which is capble
		of {\itshape exactly} computing integrals of polynomials over convex polytopes. 
\end{itemize}

\begin{example}\label{ex::1}{\rm
		Now we provide four simple FSIPP problems
		\eqref{eq::ex1}-\eqref{eq::ex4} corresponding to
the above cases. It is easy to see that {\bf (A1-3)} hold for each
problem.  We use
the software {\sf Yalmip} \cite{YALMIP} to implement the SDP
relaxation \eqref{eq::f*rduallower} and call the SDP solver 
{\sf MOSEK} \cite{mosek}
to solve the resulting SDP problems. 
The standard semidefinite representation \eqref{eq::sdr} of
$\Lambda_k$ can be easily generated using {\sf Yalmip}. We draw
$\Lambda_k$ using the software package {\sf Bermeja} \cite{Bermeja}.
The computational results of the the SDP relaxations
\eqref{eq::f*rduallower} for each problem are shown in Table
\ref{tab::1} and \ref{tab::2}, including the approximate minimizers
$\mL^{\star}_k(x)/\mL^{\star}_k(1)$, lower bounds $r^{\dsdp}_k$ of
$r^{\star}$, as well as the CPU time, for $k=6,\ldots,15$. 
The SDr approximations $\Lambda_6$ of $\K$ in the
problem \eqref{eq::ex1}-\eqref{eq::ex4} are shown in Figure
\ref{fig}.

\vskip 8pt 
\noindent I. Consider the problem
\begin{equation}\label{eq::ex1}
	\left\{ \begin{aligned}
		\min_{x\in\RR^2}\
		&(x_1+1)^2+(x_2+1)^2\\ 
		\text{s.t.}\
		& p(x,y)=x_1^2+y_1^2x_2^2+2y_1y_2x_1x_2+x_1+x_2\le 0 ,\\ &
		\forall\ y\in [-1, 1]^2.  
	\end{aligned} \right.
\end{equation}
For any $y\in[-1,1]^2$, since $p(x,y)$ is of
degree $2$ and convex in $x$, it is sos-convex in $x$.
For any $x\in\RR^2$ and $y\in[-1, 1]^2$, it is clear that 
\[
p(x,y)\le  x_1^2+x_2^2+2|x_1x_2|+x_1+x_2.  
\] 
Then we can see
that the feasible set $\K$ can be defined by only two
constraints 
\[ 
	p(x,1,1)=(x_1+x_2)(x_1+x_2+1)\le 0\ \text{and}\
	p(x,1,-1)=(x_1-x_2)^2+x_1+x_2\le 0.  
\] 
That is, $\K$ is the area in $\RR^2$ enclosed by the ellipse
$p(x, 1,-1)=0$ and the
two lines $p(x, 1,1)=0$. Then, it is easy to check that the
only global minimizer of \eqref{eq::ex1} is
$u^{\star}=(-0.5,-0.5)$ and the minimum is $0.5$. 

\vskip 8pt

\noindent II. Consider the problem
\begin{equation}\label{eq::ex2}
	\left\{ \begin{aligned}
		\min_{x\in\RR^2}\
		&(x_1+1)^2+(x_2+1)^2\\ 
		\text{s.t.}\
		& p(x,y)=x_1^2+2y_1x_1x_2+(1-y_2^2)x_2^2+x_1+x_2\le 0,\\
&	\forall y\in\Y=\{y\in\RR^2\mid y_1^2+y_2^2\le 1\}.  
	\end{aligned} \right.
\end{equation}
For any $y\in\Y$, since $p(x,y)$ is of
degree $2$ and convex in $x$, it is sos-convex in $x$.
For any $x\in\RR^2$ and $\in\Y$, it is clear that 
\[
p(x,y)\le  x_1^2+x_2^2+2|x_1x_2|+x_1+x_2.  
\] 
Then we can see
that the feasible set $\K$ can be defined by only two
constraints 
\[ 
	p(x,1,0)=(x_1+x_2)(x_1+x_2+1)\le 0\ \text{and}\
	p(x,-1,0)=(x_1-x_2)^2+x_1+x_2\le 0.  
\] 
Thus, $\K$ is the same area as in Problem \eqref{eq::ex1}.
Hence, the only global minimizer of \eqref{eq::ex2} is
$u^{\star}=(-0.5,-0.5)$ and the minimum is $0.5$. 



\vskip 8pt
\noindent III.  Consider the problem
\begin{equation}\label{eq::ex3}
	\left\{ \begin{aligned}
		\min_{x\in\RR^2}\
		&(x_1-1)^2+(x_2-1)^2\\ 
		\text{s.t.}\
		& p(x,y)=\frac{(y_1x_1-y_2x_2)^2}{4}+(y_2x_1+y_1x_2)^2-1\le 0,\\
	&\forall y\in\Y=\{y\in\RR^2\mid y_1^2+y_2^2=1\}.
	\end{aligned} \right.
\end{equation}
Geometrically, the feasible region $\K$ is the common area of these
shapes in the process of rotating the ellipse defined by
$x_1^2/4+x_2^2\le 1$ continuously around the origin by $90^{\circ}$ clockwise.
Hence, $\K$ is the closed unit disk in $\RR^2$.
Then, it is not hard to check that the
only global minimizer of \eqref{eq::ex3} is
$u^{\star}=\left(\frac{\sqrt{2}}{2},\frac{\sqrt{2}}{2}\right)$ and the
minimum is $2\left(\frac{\sqrt{2}}{2}-1\right)^2\approx 0.1716$. 

\vskip 8pt
\noindent IV. Consider the problem
\begin{equation}\label{eq::ex4}
	\left\{ \begin{aligned}
		\min_{x\in\RR^2}\
		&(x_1+1)^2+(x_2-1)^2\\ 
		\text{s.t.}\
		&p(x,y)=-1+2x_1^2+2x_2^2-(y_1-y_2)^2x_1x_2\le 0,\\
		&\forall y\in\Y=\{y\in\RR^2\mid y_1\ge -1,\ y_2\le 1,\ y_2-y_1\ge 0\}.
	\end{aligned} \right.
\end{equation}
It is easy to see that $\K$ is in fact the area
enclosed by the lines $\sqrt{2}(x_2-x_1)=\pm 1$ and the circle 
$\{x\in\RR^2\mid 2x_1^2+2x_2^2=1\}$.
Hence, it is not hard to check that the
only global minimizer of \eqref{eq::ex4} is
$u^{\star}=\left(-\frac{\sqrt{2}}{4},\frac{\sqrt{2}}{4}\right)\approx
(-0.3536, 0.3536)$ and the
minimum is $2\left(\frac{\sqrt{2}}{4}-1\right)^2\approx 0.8358$. 
\qed

\begin{figure}
	\centering
	\subfigure[Problem \eqref{eq::ex1}]{
\scalebox{0.4}{
	\includegraphics[trim=80 200 80 200,clip]{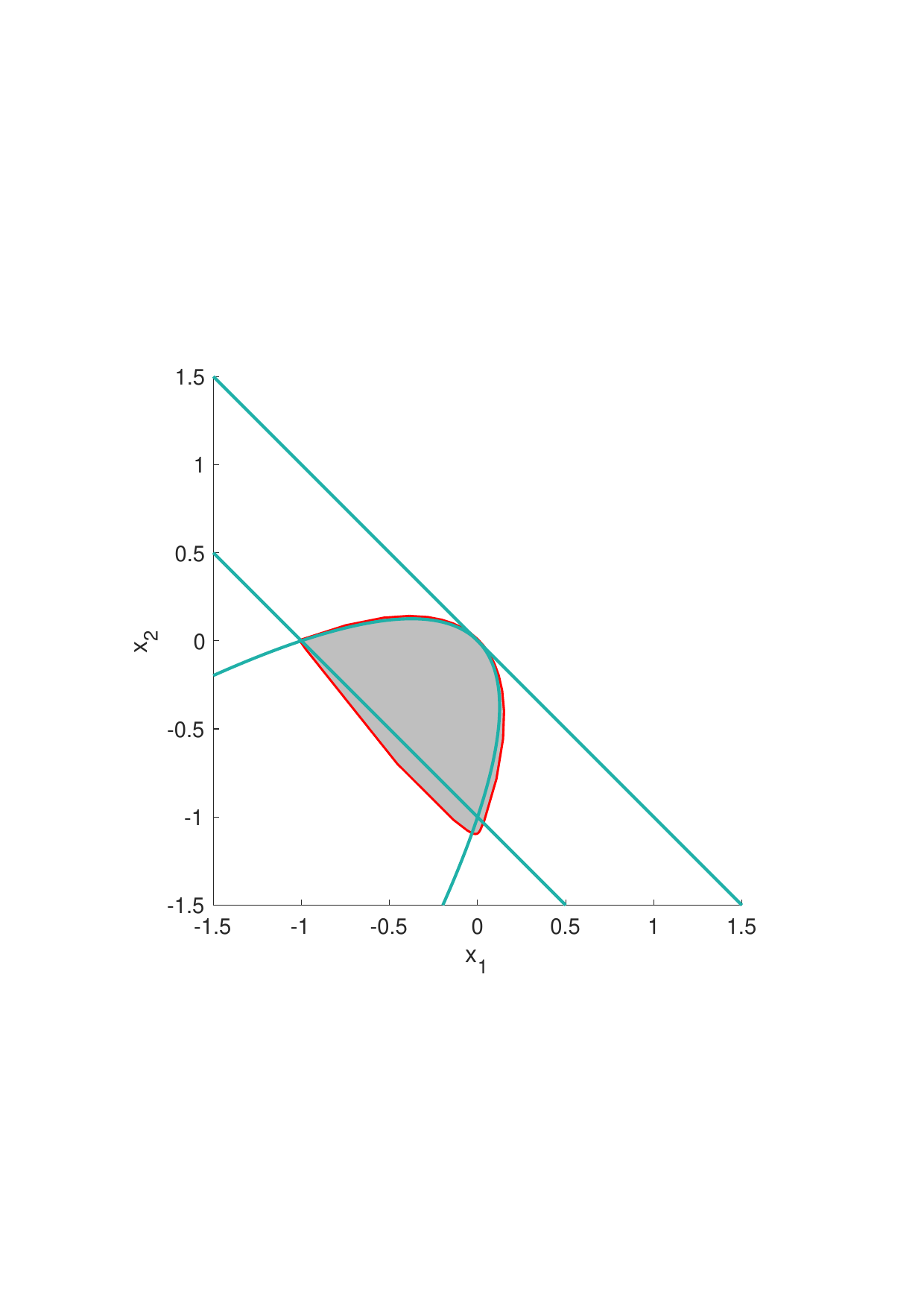}
}
}
\subfigure[Problem \eqref{eq::ex2}]{
\scalebox{0.4}{
	\includegraphics[trim=80 200 80 200,clip]{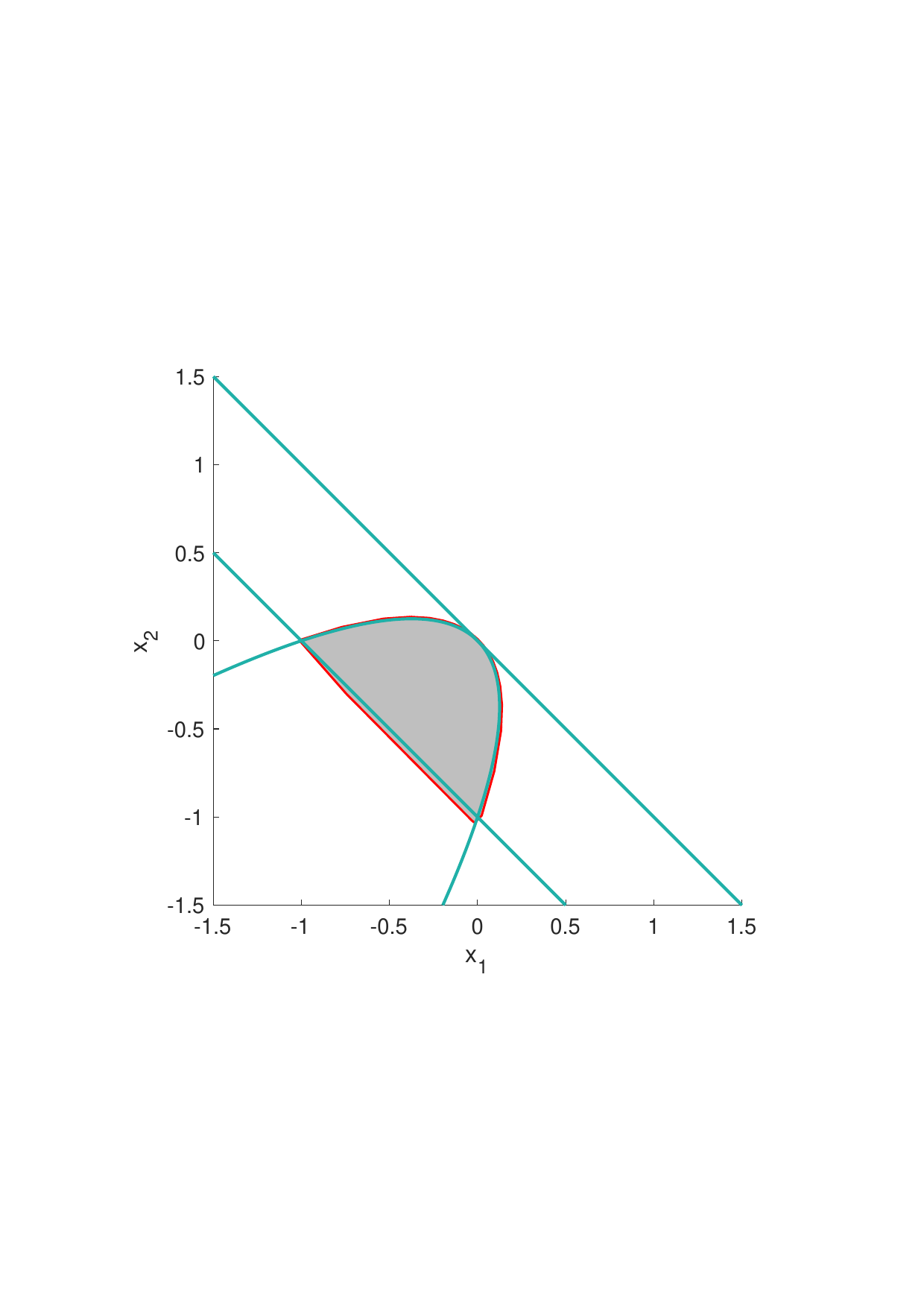}
}}\\
\subfigure[Problem \eqref{eq::ex3}]{
\scalebox{0.4}{
	\includegraphics[trim=80 200 80 200,clip]{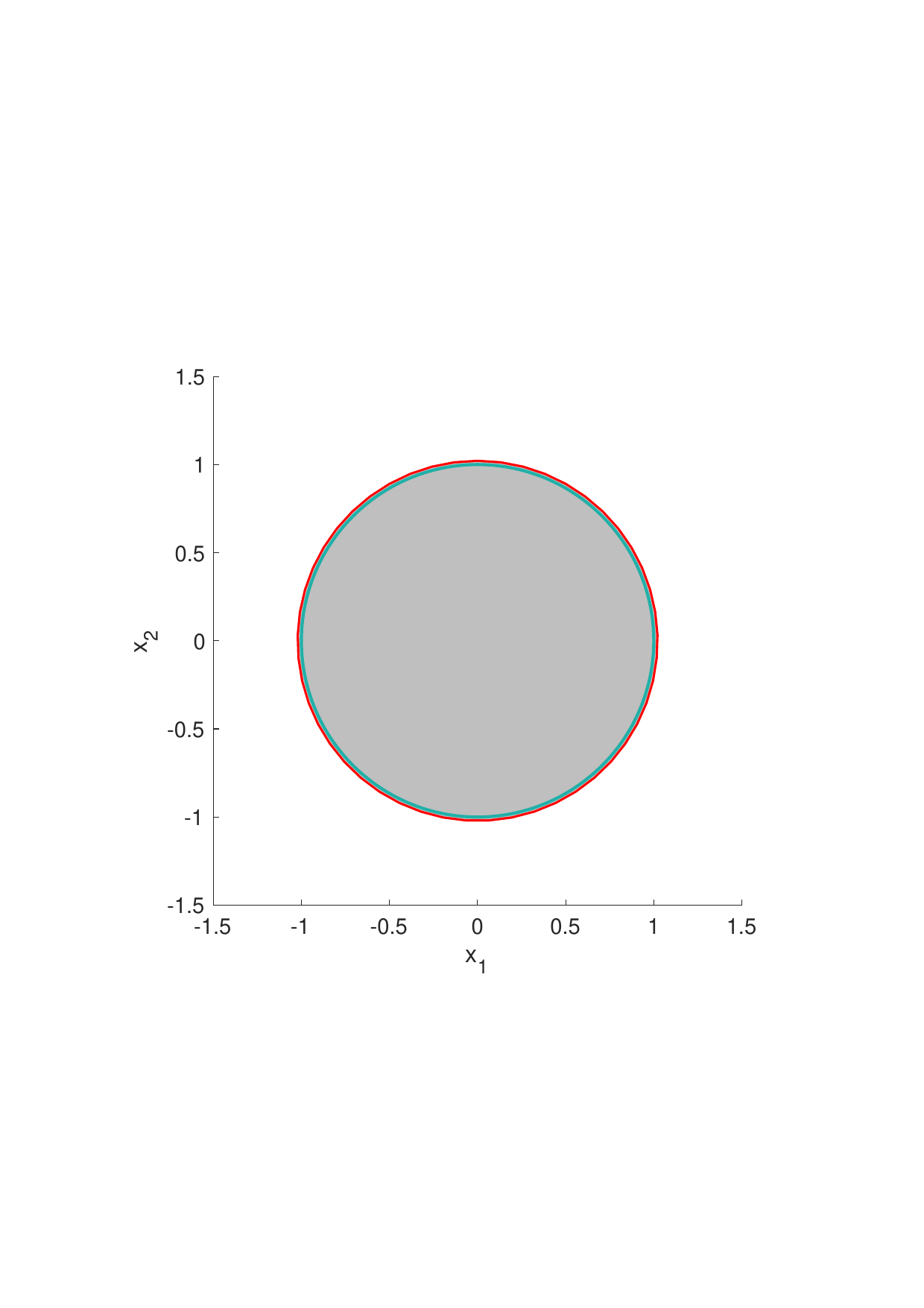}
}}
\subfigure[Problem \eqref{eq::ex4}]{
\scalebox{0.4}{
	\includegraphics[trim=80 200 80 200,clip]{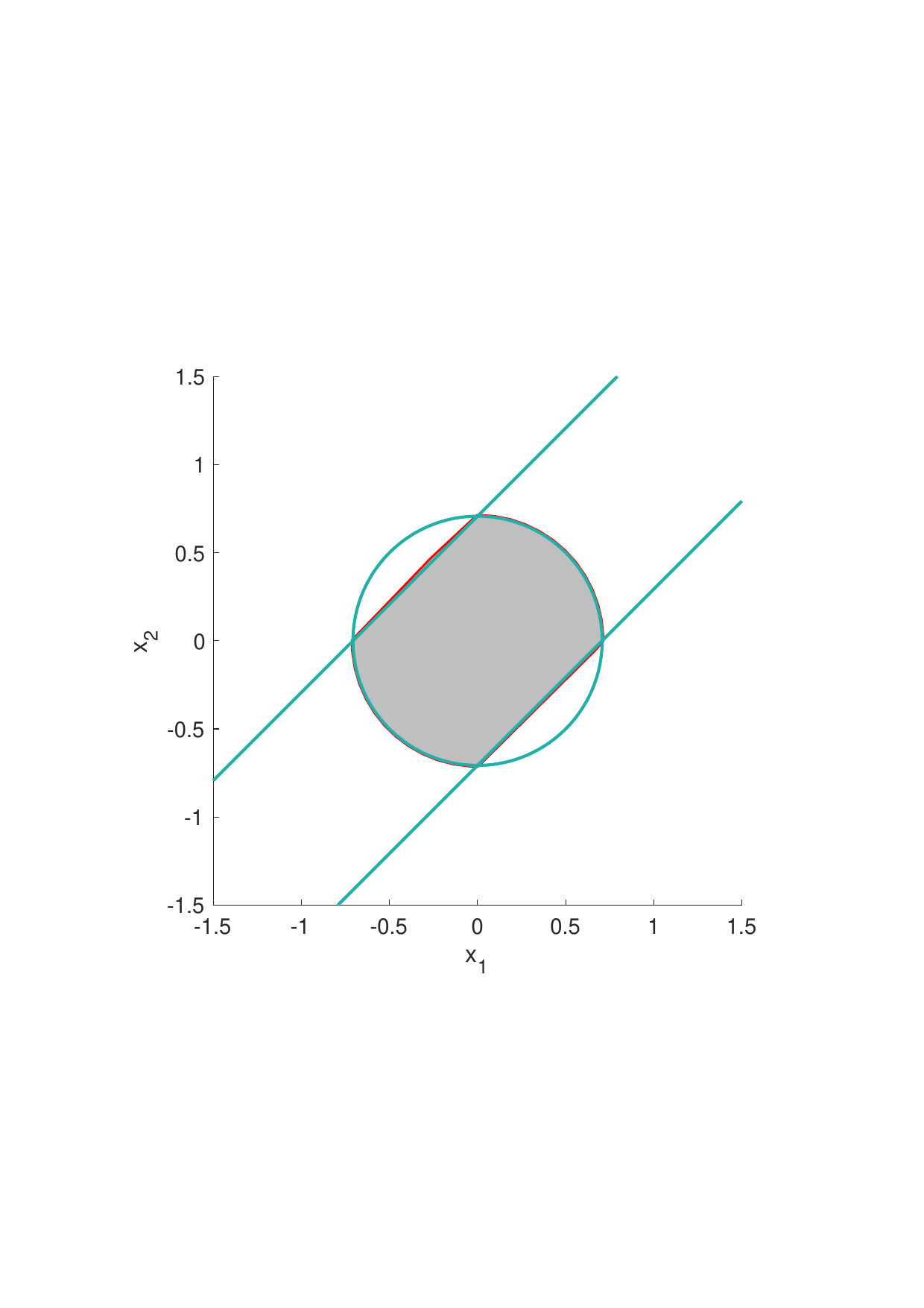}
}}
\caption{The feasible sets $\K$ (enclosed by the green curves) and
	their SDr approximations $\Lambda_6$ (the gray areas enclosed by the red
	curves) in Problem \eqref{eq::ex1}-\eqref{eq::ex4}.}
\label{fig}
\end{figure}

\begin{table}[htb]{\small
	\centering
	\begin{tabular}{rcccrcccr}
\midrule[0.8pt]
\multirow{2}{*}{$k$} &&\multicolumn{3}{c}{Problem \eqref{eq::ex1}}&
&\multicolumn{3}{c}{Problem \eqref{eq::ex2}} \\
		\cline{3-5} \cline{7-9}
&&$\mL^{\star}_k(x)/\mL^{\star}_k(1)$ & $r^{\dsdp}_k$& time& &
$\mL^{\star}_k(x)/\mL^{\star}_k(1)$ & $r^{\dsdp}_k$& time\\
\midrule[0.4pt]
		$6$&&$(-0.5368, -0.5964)$& $0.3775$&0.9s&&$(-0.5158, -0.5364)$&
		$0.4494$&1.0s\\
		$7$&&$(-0.5280, -0.5780)$& $0.4009$&1.3s&&$(-0.5121, -0.5289)$&
		$0.4600$&1.4s\\
		$8$&&$(-0.5220, -0.5644)$& $0.4182$&1.9s&&$(-0.5096, -0.5235)$&
		$0.4676$&2.0s\\
    9&&   $(-0.5178, -0.5541)$&    0.4314&   3.0s& &  $(-0.5078,
	-0.5195)$&    0.4732 &  3.1s\\
   10&&   $(-0.5147, -0.5461)$&    0.4416&   4.3s &&  $(-0.5065,
   -0.5164)$&    0.4775 &  4.5s\\
   11&&   $(-0.5123, -0.5397)$&    0.4497&   6.6s &&  $(-0.5054,
   -0.5140)$&    0.4808 &  7.1s\\
   12&&   $(-0.5105, -0.5346)$&    0.4562&   11.2s &&  $(-0.5046,
   -0.5121)$&    0.4834 &  10.7s\\
   13&&   $(-0.5092, -0.5306)$&    0.4612&   19.6s &&  $(-0.5041,
   -0.5106)$&    0.4854 &  18.1s\\
   14&&   $(-0.5090, -0.5297)$&    0.4623&   27.0s &&  $(-0.5037,
   -0.5095)$&    0.4869 & 29.2s\\
   15&&   $(-0.5082, -0.5277)$&    0.4649&  42.4s &&  $(-0.5036,
   -0.5089)$&    0.4877 & 43.8s\\
\midrule[0.8pt]
	\end{tabular}}
	\caption{Computational results for Problem
		\eqref{eq::ex1}-\eqref{eq::ex2}.}
		\label{tab::1}
\end{table}

\begin{table}[htb]{\small
	\centering
	\begin{tabular}{rcccrcccr}
\midrule[0.8pt]
\multirow{2}{*}{$k$} &&\multicolumn{3}{c}{Problem \eqref{eq::ex3}}&
&\multicolumn{3}{c}{Problem \eqref{eq::ex4}} \\
		\cline{3-5} \cline{7-9}
&&$\mL^{\star}_k(x)/\mL^{\star}_k(1)$ & $r^{\dsdp}_k$& time& &
$\mL^{\star}_k(x)/\mL^{\star}_k(1)$ & $r^{\dsdp}_k$& time\\
\midrule[0.4pt]
		$6$&&$(0.7174, 0.7174)$& $0.1597$&0.9s&&$(-0.3633, 0.3633)$&
		$0.8108$&67.3s\\
		$7$&&$(0.7152, 0.7152)$& $0.1622$&1.3s&&$(-0.3618, 0.3618)$&
		$0.8148$&114s\\
		$8$&&$(0.7137, 0.7137)$& $0.1640$&2.0s&&$(-0.3606, 0.3606)$&
		$0.8176$&171s\\
9&&   	$(0.7125, 0.7125)$&	0.1653&	2.9s&&	$(-0.3600, 0.3600)$&
0.8193&	257s\\
10&&	$(0.7117, 0.7117)$&	0.1663&	4.7s&&	$(-0.3596, 0.3596)$&
0.8203&	328s\\
11&&	$(0.7110, 0.7110)$&	0.1671&	7.1s&&	$(-0.3589, 0.3589)$&
0.8220&	465s\\
12&&	$(0.7105, 0.7105)$&	0.1677&	10.3s&&	$(-0.3584, 0.3584)$&
0.8232&	640s\\
13&&	$(0.7100, 0.7100)$&	0.1682&	17.3s&&	$(-0.3582, 0.3582)$&
0.8238&	862s\\
14&&	$(0.7097, 0.7097)$&	0.1686&	28.8s&&	$(-0.3579, 0.3579)$&
0.8246&	1147s\\
15&&	$(0.7094, 0.7094)$&	0.1689&	41.6s&&	$(-0.3576, 0.3576)$&
0.8255&	1508s\\
\midrule[0.8pt]
	\end{tabular}}
	\vskip 15pt
	\caption{Computational results for Problem
		\eqref{eq::ex3}-\eqref{eq::ex4}.}
		\label{tab::2}
\end{table}
}
\end{example}

\begin{remark}{\rm
	From Table \ref{tab::1}	and \ref{tab::2}, we can see that our new
	SDP method for \eqref{FMP} behaves similarly to Lasserre's
	measure-based SDP method for polynomial minimization problem
	\cite{LasserreOuter}. That is, the sequence of lower bounds
	$\{r^{\dsdp}_k\}_{k\in\N}$ increases rapidly in the first
	orders $k$, but rather slowly when close to $r^{\star}$. This
	behavior can also be expected from the convergence analysis
	discussed in Section \ref{sec::rate}. Nevertheless, the lower
	bounds obtained in a few orders indeed complement the upper bounds
	obtained by our previous work \cite{GuoJiaoFSIPP}. It is
	interesting to apply some acceleration techniques in
	\cite{LasserreOuter, LASSERRE2017, Lasserre2019} to
	improve the convergence rate of our new SDP method for
	\eqref{FMP}. We leave it for our future investigation.
}\qed
\end{remark}

The following example shows some computational behaviors of our SDP
method compared with the discretization method by grids (see
\cite{Hettich1993}) in computing lower bounds of $r^{\star}$. 
%

\begin{example}\label{ex::compare}
		{\rm Consider the problem
\begin{equation}\label{eq::exc}
	\left\{ \begin{aligned}
		r^{\star}:=\min_{x\in\RR^n}\ &\frac{\sum_{i=1}^n(x_i-1)^4}{\sum_{i=1}^n
	x_i+1}\\ 
		\text{s.t.}\
		&p(x,y)=\sum_{i=1}^n\left(1-\frac{(y_i-a_i)^2}{4}\right)x_i^2-1\le 0,\\
		&\forall y\in\Y=[-1,1]^n,\quad \varphi(x)=-\sum_{i=1}^n x_i\le 0,
	\end{aligned} \right.
\end{equation}
where each $a_i$ is a random number drawn from the standard uniform
distribution on the interval $[-1,1]$. Obviously, the feasible set
$\K$ is the intersection of the unit ball in $\RR^n$ with the halfspace
defined by $\sum_{i=1}^n x_i\ge 0$. Hence, the unique minimizer of
\eqref{eq::exc} is $\left(\sqrt{\frac{1}{n}},\ldots,
\sqrt{\frac{1}{n}}\right)$ and the optimal value is
$n\left(\sqrt{\frac{1}{n}}-1\right)^4/(1+\sqrt{n})$. It is clear that
{\bf A1-3} hold for \eqref{eq::exc}. 

Next, for each $n\in\N$, we generate random $a_i$'s in \eqref{eq::exc}
and solve it by the SDP relaxation \eqref{eq::f*rlower} and the
discretization method \eqref{DFMP} with the regular grid \eqref{eq::grid} whose
optimal value is denoted by $r_N^{\mbox{\upshape\tiny dis}}$. 
For the SDP relaxation \eqref{eq::f*rlower}, we use
the software {\sf Yalmip} to implement it and call the SDP solver {\sf
MOSEK} to solve the resulting SDP problems. For the
finitely constrained problem \eqref{DFMP}, as
discussed at the end of Section \ref{subsec::SDP}, we have tried to solve
it by (a) the bisection method for quasiconvex optimizaiton
using the software {\sf CVXPY} \cite{cvxpy}; (b) the interior-point
algorithm for nonlinear programming implemented in the Matlab command
{\sf fmincon}; (c) the SDP reformulation \eqref{eq::SDP4DFMP} which is
implemented by {\sf Yalmip} and solved by {\sf MOSEK}. Our numerical
experiments showed that the strategy (b) is more efficient and stable
than the other two when $n$ is large, so we only report here the
numerical results obtained by applying the Matlab command {\sf fmincon} to
\eqref{DFMP}. The initial feasible point for {\sf fmincon} is set to
be $0$.

We would
like to test $n$ in \eqref{eq::exc} as large as possible for which we can gain
meaningful lower bounds of $r^{\star}$ with these two methods. 
Therefore, we only compute and compare the first lower bound
obtained by these methods, i.e., we let $k=1$ and $N=1$ in
\eqref{eq::f*rlower}
and \eqref{eq::grid}, respectively. The computational results are
shown in Table \ref{tab::exc}.
As we can see, as $n$ increases, 
our SDP relaxations \eqref{eq::f*rlower} need much less time than
the discretization method in obtaining alike lower bounds.  
\qed
\begin{table}[h]
	\centering
	\begin{tabular}{cc r r r  c c}
\midrule[0.8pt]
$n$&&$r^{\psdp}_1$/time&& $r_1^{\mbox{\upshape\tiny dis}}$/time&& $r^{\star}$\\
\midrule[0.4pt]
10 &&  0.4414/3.7s  && 0.4752/0.9s		&& 0.5252\\[1pt]
11 &&  0.5209/4.4s  && 0.5254/2.6s		&& 0.6066\\[1pt]
12 &&  0.5959/5.4s  && 0.6042/8.1s		&& 0.6882\\[1pt]
13 &&  0.6360/6.8s  && 0.7152/22s		&& 0.7698\\[1pt]
14 &&  0.7438/9.5s  && 0.7565/1m13s		&& 0.8511\\[1pt]
15 &&  0.8109/18s   && 0.8224/3m50s		&& 0.9321\\[1pt]
16 &&  0.9050/23s   && 0.8815/12m40s	&& 1.0125\\[1pt]
17 &&  0.9343/29s   && 0.9824/37m34s	&& 1.0924\\[1pt]
18 &&  1.0070/45s	&& 1.0835/2h13m		&& 1.1716\\[1pt]
\midrule[0.8pt]\\
\end{tabular}
\caption{Computational results for Example
	\ref{ex::compare}.}\label{tab::exc}
\end{table}
}
\end{example}

\begin{remark}{\rm
	Remark that for the regular grids \eqref{eq::grid} in the
	discretization scheme, there are $(N+1)^n$ constaints to be generated
	from the grid points.
	The process could be very costly and the resulting problems become
	intractable for a large $n$.
	Of course, we are aware that there are other (commercial) softwares,
	which can deal with the process more efficiently
	and solve the resulting finitely constrained problems of much larger
	size. Meanwhile, the size of the semidefinite matrix in
	\eqref{eq::f*rlower} grows as
	$3\binom{m+\mathbf{d}}{m}+\binom{n+k}{n}$ and also becomes rapidly
	prohibitive as the order $k$ increases. Therefore, in view of the
	present status of available semidefinite solvers, we do not simply
	claim by Example \ref{ex::compare} any computational superiority of our SDP
	relaxation method over the discretization scheme. Instead, we intend
	to illustrate by the encouraging results in Example
	\ref{ex::compare} that our SDP relaxation method is promising to
	compute meaningful lower bounds of $r^{\star}$ with higher dimensional
	$\Y$ in a reasonable time. 
}\qed
\end{remark}

We end this paper with the following example to highlight the
scalability of the alternatives \eqref{eq::dsos} and
\eqref{eq::sdsos}.
\begin{example}\label{ex::compare2}
		{\rm Consider the problem
\begin{equation}\label{eq::exc2}
	\left\{ \begin{aligned}
		r^{\star}:=\min_{x\in\RR^m}\ &\sum_{i=1}^m(x_i-3)^2\\ 
		\text{s.t.}\
		&p(x,y)=\sum_{i=1}^mx_i^d-(1-y_1y_2)\le 0,\\
		&\forall y\in\Y=\{y\in\RR^2\mid y_1^2+y_2^2=1\},
	\end{aligned} \right.
\end{equation}
where $d\in\N$ is even. 
Clearly, the feasible set $\K$ is $\{x\in\RR^m\mid
	\sum_{i=1}^mx_i^d\le 1/2\}$. Hence, the unique minimizer of
	\eqref{eq::exc2} is $\left(\sqrt[d]{\frac{1}{2m}},\ldots,
	\sqrt[d]{\frac{1}{2m}}\right)$ and the optimal value is
	$m\left(\sqrt[d]{\frac{1}{2m}}-3\right)^2$. It is clear that
{\bf A1-3} hold for \eqref{eq::exc2}. 

Now we solve \eqref{eq::exc2} using the relaxations \eqref{eq::dsos},
\eqref{eq::sdsos} and \eqref{eq::f*rlower}. For comparison, we
implement all of the relaxations by means of the software package {\sf
spotless\_isos} \footnote{The {\sf spotless\_isos} software package is
available at:
\url{https://github.com/anirudhamajumdar/spotless/tree/spotless\_isos}.} 
\cite{AM2019} which is written using the Systems Polynomial
Optimization Toolbox \cite{spot}, and solve the resulting problems by
{\sf MOSEK}. As we are interested in comparing the impacts of
the dsos/sdsos/sos structures on the computation time and the obtained
lower bound quality of the corresponding relaxations, we fix the order
$k=1$ and let the numbers $(m, d)$ vary. The numerical results are
reported in Table \ref{tab::exc2}. 
Although the lower bounds $r^{\dsos}_1$ and
$r^{\sdsos}_1$ are not as good as $r^{\psdp}_1$, the times for
computing them are significantly less than that of $r^{\psdp}_1$. 
When $m$, $d$ are large and $r^{\psdp}_1$ is not available in a
reasonable time, we can still get meaningful lower bounds
$r^{\dsos}_1$ and $r^{\sdsos}_1$ by the alternatives \eqref{eq::dsos}
and \eqref{eq::sdsos}.

\begin{table}[h]
	\centering
	\begin{tabular}{rcrcrcrcr}
\midrule[0.8pt]
$(m, d)$&&$r^{\dsos}_1$/time&& $r^{\sdsos}_1$/time &&$r^{\psdp}_1$/time && $r^{\star}$\\
\midrule[0.4pt]
(16, 4) &&  83.50/2.5s		&& 102.79/3.3s	&&102.79/8.5s	&& 106.46\\[1pt]
(10, 6) &&  47.50/6.0s		&& 55.25/9.6s	&& 55.25/40s	&& 57.26\\[1pt]
(20, 4) &&  107.50/5.2s		&& 131.07/6.8s	&&131.07/59s	&& 135.44\\[1pt]
(12, 6) &&  59.50/15s		&& 67.41/24s	&&67.41/7m13s	&& 69.76\\[1pt]
(10, 8) &&  47.53/1m57s		&& 51.83/2m39s	&&51.83/8h28m		&& 53.47\\[1pt]
(12, 8) &&  59.57/7m9s		&& 63.09/10m45s	&&/>10h			&& 65.02\\[1pt]
\midrule[0.8pt]\\
\end{tabular}
\caption{Computational results for Example
	\ref{ex::compare2}.}\label{tab::exc2}
\end{table}

}
\end{example}

%

\section*{acknowledgements}
The authors are very grateful for the comments of two anonymous
referees which helped to improve the presentation.
The authors would like to thank M. J. C{\'a}novas and M. A. Goberna
for helpful comments on the metric regularity of semi-infinite convex
inequality system.
The authors are supported by the Chinese National Natural Science
Foundation under grant 11571350, the Fundamental Research
Funds for the Central Universities.

\appendix
\section{}\label{appendix}
We first recall some definitions and properties about the so-called needle
polynomials required in the proof of Proposition \ref{prop::el}.
\begin{definition}
	For $k\in\N$, the Chebyshev polynomial $T_k(t)\in\RR[t]_k$ is
	defined by  
	\[
		T_k(t)=\left\{
			\begin{aligned}
				&\cos(k \arccos t)	& \text{for}\ |t|\le 1,\\
				&\frac{1}{2}(t+\sqrt{t^2-1})^k+\frac{1}{2}(t-\sqrt{t^2-1})^k
				& \text{for}\ |t|\ge 1. 
			\end{aligned}
			\right.
	\]
\end{definition}

\begin{definition}\cite{KS1992}
For $k\in\N$, $h\in(0, 1)$, the needle polynomial
$v_k^h(t)\in\RR[t]_{4k}$ is defined by  
\[
	v_k^h(t)=\frac{T_k^2(1+h^2-t^2)}{T_k^2(1+h^2)}. 
\]
\end{definition}

\begin{theorem}\cite{KL2020, Kroo2015, KS1992}\label{th::np}
For $k\in\N$, $h\in(0, 1)$, the following properties hold for
$v_k^h(t)$:
\[
	\begin{aligned}
	& v_k^h(0)=1, & \\
	& 0\le v_k^h(t) \le	1 & \text{for}\ t\in [-1, 1], \\
	& v_k^h(t) \le 4e^{-\frac{1}{2}kh} & \text{for}\ t\in [-1, 1]\ \text{with}\ |t|\ge h. 
	\end{aligned}
\]
\end{theorem}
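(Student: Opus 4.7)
The three properties are verified in increasing order of difficulty. Property $v_k^h(0)=1$ is immediate: substituting $t=0$ into the defining ratio $T_k^2(1+h^2-t^2)/T_k^2(1+h^2)$ yields numerator and denominator both equal to $T_k^2(1+h^2)$. For the remaining two properties the plan is to exploit the explicit exponential/trigonometric form of $T_k$ on $|t|\le 1$ and on $t\ge 1$ separately, together with the monotonicity of $T_k$ on $[1,\infty)$.

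For $0\le v_k^h(t)\le 1$ on $[-1,1]$, nonnegativity is automatic since $v_k^h$ is a ratio of squares with positive denominator (note $1+h^2>1$ so $T_k(1+h^2)\ge 1>0$). To get the upper bound I split into two cases according to where the argument $s:=1+h^2-t^2$ lies. Since $t\in[-1,1]$, we have $s\in[h^2,1+h^2]$. In the regime $t^2\ge h^2$ (so $s\in[h^2,1]\subseteq[-1,1]$), the trigonometric formula for $T_k$ gives $|T_k(s)|\le 1$, while $T_k(1+h^2)\ge 1$, yielding $v_k^h(t)\le 1$. In the regime $t^2<h^2$ (so $s\in(1,1+h^2]$), the hyperbolic formula shows $T_k$ is strictly increasing on $[1,\infty)$ and positive there, so $0<T_k(s)\le T_k(1+h^2)$, again giving $v_k^h(t)\le 1$.

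The quantitative decay bound $v_k^h(t)\le 4e^{-kh/2}$ for $|t|\ge h$ is the main point. Under this restriction, $s=1+h^2-t^2\in[h^2,1]$, so as in the previous paragraph $T_k^2(s)\le 1$, and it suffices to prove the lower bound $T_k^2(1+h^2)\ge \tfrac14 e^{kh/2}$, i.e.\ $T_k(1+h^2)\ge \tfrac12 e^{kh/4}$. Using the hyperbolic form in the definition of $T_k$, and the identity $(1+h^2)^2-1=h^2(2+h^2)$, I get
\[
T_k(1+h^2)\ \ge\ \tfrac12\bigl(1+h^2+h\sqrt{2+h^2}\bigr)^k\ \ge\ \tfrac12\bigl(1+h\sqrt{2}\bigr)^k,
\]
the last inequality from dropping $h^2$ and using $\sqrt{2+h^2}\ge\sqrt{2}$. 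It then remains to check $1+h\sqrt{2}\ge e^{h/4}$ for $h\in(0,1)$, equivalently $\log(1+h\sqrt{2})\ge h/4$. Using $\log(1+u)\ge u-u^2/2$ for $u\ge 0$ with $u=h\sqrt{2}$ gives $\log(1+h\sqrt{2})\ge h\sqrt{2}-h^2\ge h(\sqrt{2}-1)>h/4$ for $h\in(0,1)$, as $\sqrt{2}-1\approx 0.414>0.25$.

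The only place where care is needed is the final elementary inequality $1+h\sqrt{2}\ge e^{h/4}$ on $(0,1)$; everything else is a direct application of the two standard representations of Chebyshev polynomials and the obvious monotonicity on $[1,\infty)$. Since this inequality is handled by a one-line Taylor estimate, I expect no real obstacle in executing the plan.
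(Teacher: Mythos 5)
Your argument is correct and complete. Note, however, that the paper itself gives no proof of this statement: Theorem \ref{th::np} is quoted verbatim from the cited references \cite{KL2020, Kroo2015, KS1992} and is used as a black box in the proof of Proposition \ref{prop::el}, so there is no in-paper argument to compare against. What you have written is essentially the standard verification from the Kro\'o--Swetits line of work: the first two properties follow from the trigonometric form of $T_k$ on $[-1,1]$ and its monotonicity on $[1,\infty)$, and the decay bound reduces to a lower estimate on $T_k(1+h^2)$ via the hyperbolic form, using $(1+h^2)^2-1=h^2(2+h^2)$ to get $T_k(1+h^2)\ge\tfrac12(1+h\sqrt{2})^k$. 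Your closing elementary inequality $\log(1+h\sqrt{2})\ge h\sqrt{2}-h^2\ge h(\sqrt{2}-1)>h/4$ is valid on $(0,1)$ (the estimate $\log(1+u)\ge u-u^2/2$ holds for all $u\ge 0$), and it even leaves room to spare --- the sharper comparison $\log(1+h)\ge h/2$ on $(0,1]$ would yield the stronger decay $4e^{-kh}$, but the stated $4e^{-kh/2}$ is all that is needed. No gaps.
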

The following result gives a lower estimator which is used in the
proof of Proposition \ref{prop::el} to lower bound the integral of the
needle polynomial.
\begin{proposition}\cite[Lemma 13]{SL2020}\label{prop::le}
	Let $\phi(t)\in\RR[t]_k$ be a polynomial of degree up to $k\in\N$,
	which is nonnegative over $\RR_{\ge 0}$ and satisfies $\phi(0)=1,
	\phi(t)\le 1$ for all $t\in [0,1]$. Let $\Phi_k : \RR_{\ge 0}
	\rightarrow \RR_{\ge 0}$ be defined by 
	\[
		\Phi_k(t)=\left\{
		\begin{aligned}
			& 1-2k^2t & \text{if}\ t\le \frac{1}{2k^2},\\
			& 0 & \text{otherwise}.
		\end{aligned}
			\right.
	\]
	Then $\Phi_k(t) \le p(t)$ for all $t\in \RR_{\ge 0}$.
\end{proposition}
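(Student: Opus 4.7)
The plan is to establish the inequality $\Phi_k(t)\le\phi(t)$ by splitting $\RR_{\ge 0}$ into the two regions on which $\Phi_k$ is defined by different formulas. On the region $t\ge 1/(2k^2)$, the function $\Phi_k$ is identically zero, and the desired inequality reduces to the assumed nonnegativity of $\phi$ on $\RR_{\ge 0}$; this case requires no work. The entire content of the proposition therefore concerns the interval $[0, 1/(2k^2)]$, on which $\Phi_k(t)=1-2k^2 t$ is the affine function that starts at $1$ at $t=0$ and reaches $0$ at $t=1/(2k^2)$.

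On that small interval I would obtain the bound by controlling the derivative $\phi'$ and integrating from $0$. The hypotheses $\phi(0)=1$, $\phi(t)\le 1$ on $[0,1]$, and $\phi\ge 0$ on $\RR_{\ge 0}$ together yield $0\le\phi(t)\le 1$ on $[0,1]$, so that the sup-norm of $\phi$ on $[0,1]$ is at most $1$. The classical \emph{Markov inequality} for polynomials of degree at most $k$ on the interval $[0,1]$ then gives
\[
\max_{t\in[0,1]}|\phi'(t)|\ \le\ 2k^{2}\max_{t\in[0,1]}|\phi(t)|\ \le\ 2k^{2},
\]
so in particular $\phi'(s)\ge -2k^{2}$ for all $s\in[0,1]$. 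Because $1/(2k^{2})\le 1$, this lower bound applies on the whole interval $[0,1/(2k^{2})]$, and integrating from $0$ gives
\[
\phi(t)\ =\ \phi(0)+\int_0^{t}\phi'(s)\,\ud s\ \ge\ 1-2k^{2}t\ =\ \Phi_{k}(t),\qquad t\in\bigl[0,\tfrac{1}{2k^{2}}\bigr],
\]
which is exactly what is needed.

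The argument therefore reduces to invoking one classical inequality plus an elementary integration, with the rescaled form of Markov's inequality on $[0,1]$ (rather than its more standard statement on $[-1,1]$) being the only nontrivial ingredient; this factor of $2$ in $2k^{2}$ is precisely why the slope in the definition of $\Phi_{k}$ is $-2k^{2}$. I do not anticipate any real obstacle: once the two cases are separated and the sup-norm bound $|\phi|\le 1$ on $[0,1]$ is observed, Markov's inequality delivers the slope estimate immediately, and the fundamental theorem of calculus finishes the proof.
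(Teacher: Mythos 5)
Your proof is correct. The paper itself does not prove this proposition --- it is quoted verbatim from the cited reference \cite{SL2020} --- but your argument (trivial case $t\ge 1/(2k^2)$ by nonnegativity, then the rescaled Markov inequality $\max_{[0,1]}|\phi'|\le 2k^2\max_{[0,1]}|\phi|\le 2k^2$ followed by integration from $0$) is exactly the standard argument used in that source, so there is nothing to add.
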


\noindent {\itshape Proof of Proposition \ref{prop::el}}\quad
For any $k\in\N$, let $\rho(k)=\frac{1}{16k^2}$ and $h(k):=(4n+2)\log
k/\lfloor k/2\rfloor$. Then, there exists a $k'\in\N$ such that
$\rho(k)\le
h(k)<\min\{\epsilon_{\Y}, 1\}$ for any $k\ge k'$. Fix a $k\ge k'$, a linear functional
$\mL_k\in(\RR[x]_{2\mathbf{d}})^*$ satisfying the conditions in
\eqref{eq::outerappro}, and a minimizer $y^{\star}$ of $\min_{y\in\Y} -\mL_k(p(x,y))$. 
Using the needle polynomial $v_k^h(t)\in\RR[t]$, define
$\sigma_k(y):=v_{\lfloor k/2\rfloor}^{h(k)}(\Vert
y-y^{\star}\Vert/(2\sqrt{n}))$.
Then, the polynomial $\td{\sigma}:=\sigma_k/\int_{\Y}\sigma_k\ud
y\in\Sigma_k^2[y]$ and feasible to \eqref{eq::pp*}. Hence, by Taylor's
theorem,
\begin{equation}\label{eq::e1}
	\begin{aligned}
		E(\mL_k)&\le \frac{1}{\int_{\Y}\sigma_k(y)\ud
	y}\int_{\Y}-\mL_k({p(x,y)})\sigma_k(y)\ud y - p^{\star}_k\\
	& = \frac{1}{\int_{\Y}\sigma_k(y)\ud
y}\int_{\Y}\sigma_k(y)(-\mL_k({p(x,y)} - p^{\star}_k) \ud y \\
& \le  \frac{B_1}{\int_{\Y}\sigma_k(y)\ud
y}\int_{\Y}\sigma_k(y)\Vert y-y^{\star}\Vert \ud y
\end{aligned}
\end{equation}
Define two sets
\[
	\Y_1:=\mathbf{B}_{2\sqrt{n}h(k)}^n(y^{\star})\cap\Y\quad\text{and}\quad
	\Y_2:=\mathbf{B}_{2\sqrt{n}\rho(k)}^n(y^{\star})\cap\Y\subseteq \Y_1. 
\]
Then, 
\begin{equation}\label{eq::e2}
	\int_{\Y}\sigma_k(y)\ud y \ge \int_{\Y_1}\sigma_k(y)\ud y 
	\ge \int_{\Y_2}\sigma_k(y)\ud y. 
\end{equation}
As $\Y\subseteq\mathbf{H}^n$, 
\begin{equation}\label{eq::e3}
	\begin{aligned}
		\int_{\Y}\sigma_k(y)\Vert y-y^{\star}\Vert \ud y& =
		\int_{\Y_1}\sigma_k(y)\Vert y-y^{\star}\Vert \ud y +
		\int_{\Y\setminus \Y_1}\sigma_k(y)\Vert y-y^{\star}\Vert \ud
		y\\
		&\le 2\sqrt{n}h(k)\int_{\Y_1}\sigma_k(y)\ud y + 
		2\sqrt{n} \int_{\Y\setminus \Y_1}\sigma_k(y)\ud y.
	\end{aligned}
\end{equation}
By Theorem \ref{th::np}, we have $\sigma_k(y)\le 4e^{-\frac{1}{2}h(k)\lfloor
k/2\rfloor}$ for any $y\in\Y\setminus\Y_1$ and hence
\[
\int_{\Y\setminus \Y_1}\sigma_k(y)\ud y \le 4e^{-\frac{1}{2}h(k)\lfloor
k/2\rfloor}\cdot\vol(\Y\setminus \Y_1)\le  4e^{-\frac{1}{2}h(k)\lfloor
k/2\rfloor}\cdot\vol(\mathbf{H}^n). 
\]
Moreover, by Proposition \ref{prop::le}, we have 
\[
	\sigma_k(y)\ge \Phi_{2k}(\Vert
y-y^{\star}\Vert/2\sqrt{n})=1-8k^2(\Vert
y-y^{\star}\Vert/2\sqrt{n})\ge \frac{1}{2}, 
\]
for all $y\in\Y_2$. Therefore, {\bf A5} implies
that
\begin{equation}\label{eq::e4}
	\int_{\Y_2}\sigma_k(y)\ud y \ge \frac{1}{2}\vol(\Y_2)\ge
	\frac{1}{2}\eta_{\Y} 2^n n^{n/2} \rho(k)^n\vol(\mathbf{B}^n)
	=\frac{\eta_{\Y} n^{n/2}\vol(\mathbf{B}^n)}{2^{3n+1}k^{2n}}.
\end{equation}
Combining \eqref{eq::e1}-\eqref{eq::e4}, we obtain
\begin{equation}\label{eq::e5}
	\begin{aligned}
		E(\mL_k)&\le 2\sqrt{n}B_1\left(h(k) +4e^{-\frac{1}{2}h(k)\lfloor k/2\rfloor}
		\frac{2^{3n+1}k^{2n}\vol(\mathbf{H}^n)}{\eta_{\Y}
		n^{n/2}\vol(\mathbf{B}^n)}\right)\\
		&=2\sqrt{n}B_1(h(k)+Ce^{-\frac{1}{2}h(k)\lfloor k/2\rfloor} k^{2n})
	\end{aligned}
\end{equation}
The conclusion follows by substituting $h(k)=(4n+2)\log k/\lfloor
k/2\rfloor$ in \eqref{eq::e5}.\qed

%


\end{document}